\pgfplotsset{compat=newest}
 \newtheorem{theorem}{Theorem}[section]
\newtheorem{lemma}[theorem]{Lemma}
\newtheorem{prop}[theorem]{Proposition}
\newtheorem{proposition}[theorem]{Proposition}
\newtheorem{corollary}[theorem]{Corollary}
\newtheorem{hypo}[theorem]{Hypothesis}
\theoremstyle{definition}
\newtheorem{definition}[theorem]{Definition}
\newtheorem{example}[theorem]{Example}
\newtheorem{remark}[theorem]{Remark}
\newcommand{\todoinline}[1]{\todo[inline,size=\normalsize]{#1}}
\DeclareMathOperator{\dist}{\mathrm{dist}}
\DeclareMathOperator{\Ran}{\mathrm{Ran}}
\DeclareMathOperator{\Ker}{\mathrm{Ker}}
\DeclareMathOperator{\Kern}{\mathrm{Ker}}
\DeclareMathOperator{\Rank}{\mathrm{Rank}}
\DeclareMathOperator{\spec}{\mathrm{spec}}
\DeclareMathOperator*{\slim}{s-lim}
\DeclareMathOperator*{\wlim}{w-lim}
\newcommand{\cA}{\mathcal{A}}
\newcommand{\cB}{\mathcal{B}}
\newcommand{\cG}{\mathcal{G}}
\newcommand{\cL}{\mathcal{L}}
\newcommand{\cF}{\mathcal{F}}
\newcommand{\dd}{\mathrm d}
\newcommand{\ee}{\mathrm e}
\newcommand{\ii}{\mathrm i}
\newcommand{\eps}{\varepsilon}
\newcommand{\limeps}{\lim_{\eps \rightarrow 0^+} }
\newcommand{\abs}[1]{\left| #1 \right|} 
\newcommand{\scalarproduct}[2]{\langle #1, #2 \rangle}
\newcommand{\norm}[1]{\|#1\|}
\newcommand{\normbig}[1]{\left\|#1\right\|}
\newcommand{\sca}[1]{\langle #1 \rangle}
\newcommand{\scabig}[1]{\left< #1 \vphantom{#1} \vphantom{#1} \right>}
\newcommand{\NN}{\mathbb{N}}
\newcommand{\RR}{\mathbb{R}}
\newcommand{\CC}{\mathbb{C}}
\newcommand{\ZZ}{\mathbb{Z}}
\newcommand{\II}{\mathbb{I}}
\newcommand{\JJ}{\mathbb{J}}
\newcommand{\QQ}{\mathbb{Q}}
\newcommand{\Wcal}{\mathcal{W}}
\newcommand{\ess}{\mathrm{ess}}
\newcommand{\EE}{\mathsf{E}}
\newcommand{\one}{\mathbf{1}}
\newcommand{\boldI}{\mathbf{I}}
\newcommand{\Onetwo}{\mathds{1}_2}
\newcommand{\onematrix}{\mathds{1}}
\newcommand{\Lla}{L_{\lambda,\alpha}}
\newcommand{\Llam}{L_{\lambda,\alpha_m}}
\renewcommand{\P}{\ensuremath{\mathcal{P}}}
\newcommand{\fol}[2]{(#1_{#2})_{#2\in\NN}}
\newcommand{\foln}[2]{(#1)_{#2\in\NN}}
\newcommand{\sfol}[2]{\{#1\}_{#2\in\NN}}
\newcommand{\folz}[2]{(#1_{#2})_{#2\in\ZZ}}
\newcommand{\LL}{\ensuremath{\mathcal{L}}}
\newcommand{\plim}{\operatorname{\P-\lim}\limits}
\newcommand{\pli}{\stackrel{\P}{\rightarrow}}
\newcommand{\Ima}{\textnormal{Im\,}}
\DeclareMathOperator{\supp}{supp}
\DeclareMathOperator{\diam}{diam}
\newcommand{\red}[1]{{\color{red} #1}}
\newcommand{\blue}[1]{{\color{blue} #1}}
\newcommand{\gray}[1]{{\color{gray} #1}}
\newcommand{\eg}{\mbox{e.g.}\xspace}
\newcommand{\ie}{\mbox{i.e.}\xspace}
\newcommand{\energy}{E}
\newcommand{\period}{K}
\newcommand{\tr}{\operatorname{tr}}
\newcommand{\trans}{T}
\newcommand{\Lim}{\operatorname{Lim}}
\newcommand{\cfin}{\mathrm{c}_{00}}
\newenvironment{addmargin}[2][\empty]{\par
  \rightskip=#2\relax
  \ifx\empty#1\relax \leftskip=\rightskip
  \else \leftskip=#1\relax
  \fi}{\par}
\def\drawBand{\fill[bandSpecCol]}
\definecolor{pointSpecCol}{rgb}{0.0, 0.5, 0.0} 
\definecolor{bandSpecCol}{rgb}{0.52, 0.52, 0.51} 
\begin{document}
\title{Finite Sections of Periodic Schrödinger Operators}

\author[Gabel, Gallaun, Gro{\ss}mann, Lindner, Ukena]{Fabian Gabel, Dennis Gallaun, Julian Gro{\ss}mann,\\Marko Lindner, Riko Ukena} 
\address{ 
Hamburg University of Technology,
Institute of Mathematics, 
Am Schwarzen\-berg-Campus 3, 
21073 Hamburg,
Germany} 
\email{\href{mailto:julian.grossmann@jp-g.de}{julian.grossmann@jp-g.de}} 
\email{\href{mailto:fabian.gabel@tuhh.de}{fabian.gabel@tuhh.de}} 
\email{\href{mailto:dennis.gallaun@tuhh.de}{dennis.gallaun@tuhh.de}} 
\email{\href{mailto:lindner@tuhh.de}{lindner@tuhh.de}} 
\email{\href{mailto:riko.ukena@tuhh.de}{riko.ukena@tuhh.de}} 

\keywords{Finite section method, limit operators, Jacobi operators}
\subjclass{Primary 65J10, 47B36; Secondary 47N50}

\begin{abstract}
We study discrete Schrödinger operators $H$ with periodic potentials as they are typically used to approximate aperiodic Schrödinger operators like the Fibonacci Hamiltonian. 
  We prove an efficient test for applicability of the finite section method, a procedure that approximates $H$ by growing finite square submatrices $H_n$. 
  For integer-valued potentials, we show that the finite section method is applicable as soon as $H$ is invertible. 
  This statement remains true for $\{0,\lambda\}$-valued potentials with fixed rational $\lambda$ and period less than nine as well as for arbitrary real-valued potentials of period two.
\end{abstract}

\maketitle

\section{Introduction}\label{sec:intro}
\subsection*{Discrete Schrödinger Operators}
If one discretises the one-dimensional Schrödinger operator $\Delta+a\cdot$ with potential $a$ by finite differences, one derives a so-called \emph{discrete Schrödinger operator} $H$, acting on a two-sided infinite sequence $x\colon\ZZ\to\CC$ via
\begin{equation}\label{eq:def-perschr-twosided}
(Hx)_k=x_{k-1}+x_{k+1}+v(k)x_{k}\,,\qquad k\in\ZZ\,.
\end{equation}
Typically, $x$ is an element of $\ell^p(\ZZ)$ with some $p\in[1,\infty]$, where $v$, again called the \emph{potential} of $H$, is essentially formed by samples of $a$.
We will assume that $a$ is bounded on $\RR$, whence $v$ is bounded on $\ZZ$, making $H$ a bounded linear operator on every space $\ell^p(\ZZ)$.
If $v$ is a periodic function, $H$ is called a \emph{periodic Schrödinger operator}.
In theoretical solid-state physics, periodic Schrödinger operators represent a standard model for periodic crystals.

Identifying $x$ with a two-sided infinite vector, $H$ acts as a two-sided infinite tridiagonal matrix $(H_{ij})_{i,j \in \ZZ}$ with the sequence $v$ on the main diagonal and ones on the diagonals immediately below and above. 
Note that $H$ is \emph{self-adjoint} on $\ell^2(\ZZ)$ if $v$ is real-valued. Otherwise, $H$ still is \emph{normal}.

The class of periodic discrete Schrödinger operators is very well-studied and several methods have been developed to understand its spectral properties. 
This includes \emph{Floquet--Bloch theory}, see, e.g., \cite[Chapter~7]{Teschl.2000} and \cite[Chapter~XIII.16]{Reed.1978}, the \emph{transfer matrix formalism}, see, e.g., \cite[Section~3]{Damanik.2021} and \cite{Last.1999}, 
and more recently and generally the application of the dynamical systems formalism, see \cite{Damanik.2017} and the references therein.
In this article, we study the spectrum and the approximation of periodic Schrödinger operators via the method of limit operators and the finite section method.

\subsection*{The Finite Section Method}
If~$H$ is invertible, to find the unique solution of the system~$Hx = b$, one often uses a truncation technique that replaces the original (infinite-dimensional) system with a sequence of finite linear systems: the \emph{finite section method (FSM)}.

Let $H_n$ correspond to the finite square submatrix $(H_{ij})_{i,j=-n}^n$, the so-called \emph{$n$-th finite section} of $H$.
We say that the FSM is \emph{applicable} to $H$ if, for sufficiently large $n$, the inverses $H_n^{-1}$ exist and approximate the inverse of $H$. 
By \emph{approximation} we mean that we first embed $H_n^{-1}$ into a two-sided infinite matrix with the same shape as $H$ and that $H_n^{-1}e_j\to H^{-1}e_j$ holds for all canonical unit vectors $e_j\in\ell^p(\ZZ)$. 
For $p<\infty$ this implies strong convergence $H_n^{-1}x\to H^{-1}x$ for all $x\in\ell^p(\ZZ)$. 
In particular, the solution of $Hx=b$ can be approximated by the solutions of corresponding finite-dimensional equations.

Some models also use half-line Schrödinger operators $H_+$ on $\ell^p(\ZZ_+)$, with $\ZZ_+ \coloneqq \{0,1,2,\dots\}$, which also can be seen as a restriction of $H$ from \eqref{eq:def-perschr-twosided} with a Dirichlet boundary condition, given by
\begin{equation}\label{eq:def-perschr-onesided}
    (H_+ x)_k=x_{k-1}+x_{k+1}+v(k)x_{k}\,,\quad k\in\ZZ_+\,, ~ \text{ where } ~x_{-1} = 0\,.
\end{equation}
This operator is then represented by a one-sided infinite matrix $(H_{ij})_{i,j=0}^\infty$. 
In this case, the \emph{$n$-th finite section} $H_n$ of $H_+$ is given by the matrix $(H_{ij})_{i,j=0}^n$.

A standard result, see, e.g., \cite{Lindner.2006,Rabinovich.2004}, shows that:
\begin{equation*} 
\begin{array}{c}
\textit{FSM applicable to $H$}
\end{array}
\ \iff\ \left\{
\begin{array}{l} 
\textit{(a)\, $H$ is invertible}\\
\textit{(b)\, all but finitely many $H_n$ are invertible}\\
\textit{(c)\, with uniformly bounded inverses $(H_n^{-1})$}\\
\end{array}
\right.
\end{equation*}
Note that the equivalence above also analogously holds for one-sided operators $H_+$ instead of $H$.

For some classes of operators, it turns out that the invertibility of $H$ (or $H_+$) alone is already sufficient for the applicability of the FSM, as it already implies the two other conditions. Of course, this simplifies FSM matters a lot. Let us call such an operator \emph{FSM-simple} -- either it has an applicable FSM, or it is not invertible. Consequently, if an FSM-simple operator is invertible, then the FSM is applicable.

\subsection*{Example: Fibonacci Hamiltonian}
The two-sided infinite discrete Schrödinger operator with potential 
\begin{equation*}
v(k)\ =\ \chi_{[1-\alpha,1)}(k\alpha \bmod 1)\, ,\qquad k\in\ZZ\,,
\end{equation*}
where $\alpha=\frac12(\sqrt5-1)$ is the golden ratio, is called the \emph{Fibonacci Hamiltonian}.
As~\cite{Lindner.2018} shows, this operator is FSM-simple.

The Fibonacci potential $v$ is a combinatorically very interesting mix of zeros and ones. Since $\alpha$ is irrational, it is not periodic but a so-called quasiperiodic sequence. The entries $v(k)$ with $k=-1,\dots,55$ look like this:
\begin{equation*}
1011010110110101101011011010110110101101011011010110101
\end{equation*}
Certain patterns are impossible, others reoccur in interesting ways that are not quite periodic. 
The patterns $101$ and $10$ repeat like the ones and zeros on the original level: we observe self-similarity.
The Fibonacci potential is, of course, well-studied and well-understood, see, e.g.,~\cite{Damanik.2000,Damanik.2007,Damanik.2016,Sueto.1995,Yessen.2011}. 
The proof in~\cite{Lindner.2018} that shows that $H$ is FSM-simple exploits a lot of this complexity and beauty of the Fibonacci Hamiltonian. 
At the same time, the proof heavily depends on the explicit structure of the Fibonacci Hamiltonian which raises the question of whether every discrete Schrödinger operator with a potential of zeros and ones is in fact FSM-simple.

\subsection*{Our Results}
In this paper, we prove sufficient conditions for a periodic Schrödinger operator to be FSM-simple.
In particular, all periodic Schrödinger operators with $\{0,1\}$-valued potential are FSM-simple.

\begin{theorem}\label{thm:integerFSM}
Let $ p \in [1,\infty]$ and $H$ be a periodic discrete Schrödinger operator on $\ell^p(\ZZ)$ given by \eqref{eq:def-perschr-twosided}, where $v$ is the potential.
\begin{enumerate}[label=(\alph*)]
\item\label{it:thmInt} If $v$ is integer-valued, then $H$ is FSM-simple.
\item\label{it:thmRat}  If the period is less than nine and $v$ only takes values in $\{0,\lambda\}$ with a fixed rational $\lambda$, then $H$ is FSM-simple.
\item\label{it:thmTwoPer}  If the period is two and the potential is real-valued, then $H$ is FSM-simple.
\end{enumerate}
If $H_+$ is a periodic discrete Schrödinger operator on $\ell^p(\ZZ_+)$ given by \eqref{eq:def-perschr-onesided}, then statements \ref{it:thmInt}--\ref{it:thmTwoPer} hold analogously for $H_+$ instead of $H$.
\end{theorem}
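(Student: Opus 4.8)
The plan is to reduce FSM-applicability, via the stated equivalence, to verifying conditions (b) and (c) once invertibility of $H$ is assumed, and to carry this out using the limit operator machinery together with the periodicity of $v$. Because $v$ is periodic with some period $\period$, the two-sided operator $H$ is band-dominated and its finite sections $H_n$ have a highly structured behaviour: the relevant limit operators arising from the finite section sequence $(H_n)$ are precisely the half-line restrictions obtained by cutting $H$ at the left and right endpoints. The standard theory (as in \cite{Lindner.2006,Rabinovich.2004}) tells us that, for such band-dominated operators, the sequence $(H_n)$ is stable (that is, (b) and (c) hold) if and only if $H$ is invertible and two associated half-line operators $H_+$ and its reflected counterpart $H_-$ are invertible on $\ell^p(\ZZ_+)$. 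Thus I would first record this reduction explicitly: \emph{FSM-simplicity of $H$ is equivalent to the implication ``$H$ invertible $\Rightarrow$ both one-sided restrictions invertible''.} The one-sided case in the theorem then feeds directly into the two-sided case, so it suffices to analyse half-line operators.

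The next step is to translate invertibility of the half-line operator $H_+$ into a transfer-matrix condition. For a periodic Jacobi/Schrödinger operator the kernel equation $H x = 0$ (or $H_+ x = 0$ with the Dirichlet condition $x_{-1}=0$) is a three-term recurrence, and its solutions are governed by the monodromy (period-$\period$ transfer) matrix $\trans \in SL_2$. I would express solvability and Fredholm/invertibility properties of $H_+$ in terms of the spectrum of $\trans$: the relevant dichotomy is whether $\trans$ is hyperbolic, elliptic, or parabolic, i.e. whether $\abs{\tr \trans} > 2$, $< 2$, or $= 2$. The crucial observation is that $H$ invertible forces the energy $0$ to lie in a spectral gap, which corresponds to $\trans$ being hyperbolic; in that regime $H_+$ fails to be invertible only in the exceptional situation where a decaying solution of the recurrence happens to satisfy the Dirichlet boundary condition. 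So the heart of the matter is to rule out, under the stated hypotheses on $v$, that the boundary value of the (unique up to scaling) decaying solution vanishes.

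The main obstacle is exactly this non-vanishing of a boundary entry of the decaying solution, and this is where the three separate hypotheses of the theorem do their work. I expect the argument to become a number-theoretic/combinatorial case analysis of the entries of the monodromy matrix $\trans$, whose entries are polynomials in the potential values $v(0),\dots,v(\period-1)$ with integer coefficients. For integer-valued potentials (part \ref{it:thmInt}) the entries of $\trans$ are integers, and one can argue that the boundary condition $x_{-1}=0$ together with integrality forces an arithmetic relation among the $\trans$-entries that is incompatible with hyperbolicity ($\abs{\tr \trans} > 2$); concretely I would show the offending entry of $\trans$ cannot vanish, or that if it does then $\abs{\tr\trans}\le 2$ and $0$ is not in a gap, contradicting invertibility. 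For the $\{0,\lambda\}$-valued case with rational $\lambda$ and period below nine (part \ref{it:thmRat}), the finitely many admissible potential patterns (bounded length, two possible values) make the monodromy entries into a finite, explicitly enumerable family of polynomials in $\lambda$; I would clear denominators and check, pattern by pattern, that the relevant entry never vanishes while $\trans$ is hyperbolic—this is the step I anticipate requiring the ``period less than nine'' restriction, since the polynomial identities one must exclude can genuinely hold for longer periods. For period two with arbitrary real $v$ (part \ref{it:thmTwoPer}), the monodromy matrix is small enough to compute by hand, and one checks directly that hyperbolicity of $\trans$ precludes the vanishing of the boundary entry. In each case, once $H_+$ (and by the reflection symmetry $H_-$) is shown invertible whenever $H$ is, stability of $(H_n)$ and hence FSM-applicability follow from the limit-operator criterion, completing the proof; the one-sided statements are obtained along the way, since they are exactly the invertibility assertions about $H_+$ that the argument establishes.
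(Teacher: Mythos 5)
Your overall strategy is the same as the paper's: reduce FSM-applicability to invertibility of one-sided compressions via the limit-operator criterion, characterise failure of one-sided invertibility through the monodromy matrix (hyperbolicity plus the decaying solution hitting the Dirichlet condition, i.e.\ $M_{2,1}=0$ and $|M_{1,1}|<1$), and then dispose of the three cases by integrality of $M$, by a finite enumeration of $\{0,\lambda\}$-patterns, and by direct computation for period two. This is exactly the paper's route (Propositions \ref{prop:FSMCondition}, \ref{rem:Monodromy_Hplus_not_inject}, \ref{prop:oneSidedPeriodic} and Section \ref{sec:0,lambda-systematic}).

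There is, however, one genuine error in your reduction step. You claim that stability of $(H_n)$ is equivalent to invertibility of $H$ together with that of \emph{two} half-line operators, $H_+$ and its reflection $H_-$. For a $K$-periodic operator this is false: the limit operators of the finite-section sequence are the one-sided compressions of \emph{all} $K$ cyclic shifts $S^{-k}HS^k$ (Lemma \ref{lem:limOpsPeriodic}), so one must test all $2K$ matrices $M^{(0)},\dots,M^{(K-1)},\widetilde M^{(0)},\dots,\widetilde M^{(K-1)}$, as in Theorem \ref{thm:M21_M11}. Example \ref{ex:3per} is a concrete counterexample to your reduction: for the $3$-periodic potential $(2,\tfrac12,\tfrac12)$ both $H_+$ and $H_-$ are invertible (the matrices $M^{(0)}$ and $\widetilde M^{(0)}=M^{(1)}$ pass the test \eqref{eq:checkM}), yet the FSM fails because the shifted compression governed by $M^{(2)}$ is not invertible. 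Your proof survives only because the hypotheses in (a)--(c) are invariant under cyclic shifts of the potential: an integer-valued (resp.\ $\{0,\lambda\}$-valued, resp.\ $2$-periodic real) potential stays in the same class after shifting, so the same case analysis applies to every shift, and in (b) enumerating all patterns $w\in\{0,1\}^K$ automatically covers them. You should state the reduction correctly and make this shift-invariance explicit; the same remark applies to the one-sided statement, where FSM-simplicity of $H_+$ again requires invertibility of all compressions $\widetilde L_+$ with $\widetilde L\in\Lim_-(H^{\mathrm R})$, not merely invertibility of $H_+$ itself. Finally, in part (a) the precise arithmetic step is worth spelling out: if $M_{2,1}=0$ then $M$ is triangular with integer diagonal entries whose product is $\det M=1$, so $|M_{1,1}|\ge 1$; combined with $|\tr M|>2$ this forces $|M_{1,1}|>1$, so the test \eqref{eq:checkM} is passed.
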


\begin{remark}\label{rem:12}
\phantomsection
\begin{enumerate}[label = (\alph*)]  
    \item\label{it:sharpExamples} The bound on the period length in Theorem~\ref{thm:integerFSM}\ref{it:thmRat} is optimal. 
      The operator $H$ with the $9$-periodic potential repeating the vector $\frac 12(1,1,0,1,0,1,0,1,1)$ is not FSM-simple, see Example~\ref{ex:9per} below.
The $5$-periodic potential repeating $\frac 1{\sqrt2}(1,1,0,1,0)$ shows that Theorem~\ref{thm:integerFSM}\ref{it:thmRat} fails if we drop rationality of $\lambda$, see Example~\ref{ex:5per} below.
\item\label{it:periodThree}  The $3$-periodic potential repeating $(2,\frac 12,\frac 12)$ shows that Theorem~\ref{thm:integerFSM}\ref{it:thmTwoPer} cannot include period three, see Example~\ref{ex:3per} below.
\end{enumerate}
\end{remark}

For $K$-periodic Schrödinger operators $H$ on $\ell^p(\ZZ)$ with real-valued potential $v$, the invertibility is particularly easy to check, see, e.g.,~\cite{Puelz.2014}: $H$ is invertible if and only if any of the following matrices has its trace outside $[-2,2]$:
\begin{align}\label{eq:M^j}
\quad M^{(j)}&=
\begin{pmatrix} -v(K-1+j)&-1\\1&0\end{pmatrix}\cdots 
\begin{pmatrix} -v(1+j)&-1\\1&0\end{pmatrix}
    \begin{pmatrix} -v(j)&-1\\1&0\end{pmatrix}\\[1em]
\label{eq:N^j}
  \widetilde{M}^{(j)}&=
\begin{pmatrix} -v(j)&-1\\1&0\end{pmatrix}
\begin{pmatrix} -v(1+j)&-1\\1&0\end{pmatrix}\cdots 
\begin{pmatrix} -v(K-1+j)&-1\\1&0\end{pmatrix}
\end{align}
with $j\in\ZZ$. 
If $H$ is FSM-simple, then that already settles the applicability of the FSM.
However, for periodic Schrödinger operators that are possibly not FSM-simple, 
in addition to the aforementioned trace condition, one has to check whether
\begin{equation}\label{eq:checkM}
M_{2,1}\ne 0\qquad\text{or}\qquad |M_{1,1}|>1\,,
\end{equation}
where $M$ runs through the set of matrices~\eqref{eq:M^j} and \eqref{eq:N^j}.

\begin{theorem}\label{thm:M21_M11}
  Let $ p \in [1,\infty]$, $H$ be a $K$-periodic discrete Schrödinger operator on $\ell^p(\ZZ)$ 
  with real-valued potential $v$ defined by \eqref{eq:def-perschr-twosided},
	and $H_+$ be the restriction on $\ell^p(\ZZ_+)$ defined by~\eqref{eq:def-perschr-onesided}. 
  In addition, let $M^{(j)}$ and $\widetilde{M}^{(j)}$ be given by \eqref{eq:M^j} and \eqref{eq:N^j}. Then the following holds:
\begin{enumerate}[label=(\alph*)]
    \item\label{it:thmTwoSided}
The FSM is applicable to $H$ if and only if 
$|\tr(M^{(0)})|>2$ and
\begin{equation*}
    M^{(0)},\dots,M^{(K-1)} \text{ and } \widetilde{M}^{(0)},\dots,\widetilde{M}^{(K-1)}
    \text{are subject to \eqref{eq:checkM}}.
\end{equation*}
\item\label{it:thmOneSided} 
The FSM is applicable to $H_+$ if and only if
$|\tr(M^{(0)})|>2$ and
\begin{equation*}
M^{(0)} \text{ and } \widetilde{M}^{(0)},\ldots,\widetilde{M}^{(K-1)}
\text{are subject to \eqref{eq:checkM}}\,.
\end{equation*}
\end{enumerate}
In particular, the FSM is applicable to $H_+$ if it is applicable to its periodic extension $H$ on $\ell^p(\ZZ)$. Moreover, $H_+$ is FSM-simple if $H$ is FSM-simple.
\end{theorem}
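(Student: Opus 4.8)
The plan is to reduce the applicability of the FSM, via the equivalence (a)--(c) recalled in the introduction, to the invertibility of $H$ (respectively $H_+$) together with the invertibility of finitely many half-line operators that arise as \emph{limit operators of the finite-section sequence} $(H_n)$, and then to read off each of these invertibility statements from the transfer matrices $M^{(j)}$ and $\widetilde{M}^{(j)}$. Since invertibility and finite-section stability of band operators are independent of $p\in[1,\infty]$, it suffices to verify the transfer-matrix conditions, which are themselves $p$-independent, so the analysis may be carried out with the real $2\times 2$ matrices of \eqref{eq:M^j} and \eqref{eq:N^j}.

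First I would pin down the limit operators attached to the edges of $H_n$. For the two-sided section on $\{-n,\dots,n\}$ the left edge carries the Dirichlet cut $x_{-n-1}=0$ and the right edge the cut $x_{n+1}=0$. Translating the left edge to the origin and using $K$-periodicity, the left-edge limit operators obtained as $n$ runs through the residues modulo $K$ are precisely the right half-line Dirichlet Schrödinger operators whose potential is the original one shifted by $j\in\{0,\dots,K-1\}$, and whose propagation from the boundary into the bulk is governed by $M^{(j)}$. The right edge, after translating to the origin and reflecting $k\mapsto -k$ (which reverses the order of the one-step transfer matrices), yields the reflected half-line operators governed by $\widetilde{M}^{(j)}$, again for $j\in\{0,\dots,K-1\}$. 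For the one-sided operator $H_+$ the left edge is frozen at $0$ and contributes only the invertibility of $H_+$ itself, governed by $M^{(0)}$, while the moving right edge again produces $\widetilde{M}^{(0)},\dots,\widetilde{M}^{(K-1)}$.

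The heart of the argument is the invertibility of a half-line Schrödinger operator at energy $0$. Because $0$ leaves the essential spectrum exactly when the monodromy matrix is hyperbolic, i.e. when $|\tr(M^{(0)})|>2$ (and all $M^{(j)}$ and $\widetilde{M}^{(j)}$ share the common trace $\tr(M^{(0)})$, being mutually conjugate), the only remaining obstruction is a zero-energy bound state. Propagating the Dirichlet initial vector $(1,0)^{\top}$ forward by one period yields $(M_{1,1},M_{2,1})^{\top}$, and a solution decaying into the bulk exists iff this vector lies in the contracting eigendirection of the hyperbolic matrix $M$, which forces $M_{2,1}=0$ together with $|M_{1,1}|<1$. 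Negating this is exactly \eqref{eq:checkM}; moreover $|\tr(M)|>2$ excludes $|M_{1,1}|=1$ once $M_{2,1}=0$, so the strict inequality in \eqref{eq:checkM} is the correct one. Applying this criterion edge by edge assembles the stated equivalences \ref{it:thmTwoSided} and \ref{it:thmOneSided}.

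The two final assertions are then formal consequences. The conditions characterizing applicability to $H_+$ in \ref{it:thmOneSided} form a subcollection of those characterizing applicability to $H$ in \ref{it:thmTwoSided} — only the left-edge matrices $M^{(1)},\dots,M^{(K-1)}$ are dropped — so the FSM applicable to $H$ implies it is applicable to $H_+$. For the last claim, suppose $H$ is FSM-simple and $H_+$ is invertible; then $0\notin\spec_{\ess}(H_+)=\spec(H)$, so $H$ is invertible, whence the FSM is applicable to $H$ by FSM-simplicity and therefore to $H_+$ by the preceding implication. Thus $H_+$ is FSM-simple. The main obstacle I anticipate is not the elementary transfer-matrix computation of the third paragraph but the first reduction: rigorously identifying the edge limit operators of the periodic finite-section sequence and, in particular, keeping the reflection bookkeeping correct so that the right edge is governed by $\widetilde{M}^{(j)}$ and the left edge by $M^{(j)}$.
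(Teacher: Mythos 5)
Your proposal is correct and follows essentially the same route as the paper: reduce FSM applicability via the limit-operator criterion (Proposition~\ref{prop:FSMCondition} combined with the reflection Lemma~\ref{lem:specH-}) to the invertibility of $H$ and of the half-line operators indexed by $j=0,\dots,K-1$, then characterise the latter by the hyperbolicity/eigenvector argument for the monodromy matrix, which is exactly Proposition~\ref{rem:Monodromy_Hplus_not_inject}. The closing deductions (conditions for $H_+$ being a subcollection of those for $H$, and the transfer of FSM-simplicity via $\sigma(H)\subset\sigma(H_+)$) also match the paper's proof.
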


\begin{remark}\label{rem:14}
The $9$-periodic Schrödinger operator $H$ with potential consisting of 
repeated $\frac 1{\sqrt 2}(1, 1, 1, 0, 1, 1, 0, 1, 0)$ is an example where the FSM is applicable to $H_+$ but not to $H$, see Example~\ref{ex:onesided_only} below.
\end{remark}

\section{The FSM: Operators and Tools}\label{sec:fsm_intro}
\subsection{Band Operators and the FSM}
A discrete Schrödinger operator~$H$ can analogously be described by a two-sided infinite tridiagonal matrix~$A = (a_{ij})_{i,j \in \ZZ}$ which is defined by
	\begin{equation}\label{eq:twoSidedInfiniteMatrixRep}
		A = 
	      \begin{pmatrix}
              \ddots & \ddots  &        &      \\	
		\ddots  & 	v(0)      & 1   &        &        &            \\
		&	1     & v(1)    &  1   &        &              \\
         &   &  1    & v(2)    & \ddots &                 \\
           &        &  & \ddots & \ddots         
		\end{pmatrix} \,.
	\end{equation}

The so-called \emph{finite section method (FSM) for~$H$ or $A$} considers the sequence of finite submatrices 
\begin{equation}\label{eq:submatricesFSM}
A_n = (a_{ij})_{i,j=-n}^{n},\quad n\in\NN \,,
\end{equation}
and asks the following: 
\vskip1ex
\begin{addmargin}[2em]{2em}
  \noindent\emph{Are the matrices $A$ and $A_n$ invertible for all sufficiently large $n$ and are their inverses (after embedding them into a two-sided infinite matrix) strongly convergent to the inverse of $A$?}
\end{addmargin}
\vskip1ex
In the case of a positive answer, we call the FSM (for $A$) \emph{applicable}.
This approximation of $A^{-1}$ can be used for solving equations $Ax=b$ approximately via the solutions of growing finite systems $A_n$.
First rigorous treatments of this natural approximation can be found in~\cite{Baxter.1963, Gohberg.1974}. 
In case the FSM is not applicable in the above sense, it may still be possible to establish an applicability result by passing to suitable 
modifications $A_n = (a_{ij})_{i,j=l_n}^{r_n}$ with well-chosen sequences $l_n\to-\infty$ and $r_n\to\infty$, see~\cite{Lindner.2010, Rabinovich.2008}.

Neither the FSM nor the following tools and results in Section~\ref{sec:fsm_intro} require self-adjointness (unless we explicitly say so).

Operators like the Schrödinger operator $H$ whose infinite matrix representation only exhibits finitely many non-zero diagonals are a well-known subject of investigation regarding the applicability of the FSM.
Operators of this type are summarised in the class of \emph{band operators}, which we introduce next. 

\begin{definition}[Band-Width and Band Operator]\label{def:band-width} 
A finite sum 
\begin{equation*}
A\ =\ \sum_{k=-\omega}^\omega M_{a^{(k)}}S^k
\end{equation*} 
of products of
multiplication operators $(M_{a^{(k)}}x)_n=a^{(k)}_nx_n$ with $a^{(k)}\in\ell^\infty(\ZZ)$
and powers of the shift operator, $(S x)_n = x_{n - 1}$, is called \emph{band operator} with 
\emph{band-width} $\omega\in\NN$.
\end{definition}

By definition, a band operator $A$ acts as a bounded operator on every $\ell^p(\ZZ)$ with $p\in[1,\infty]$. In the following, we will identify an operator $A$ on $\ell^p(\ZZ)$ with its usual \emph{matrix representation} $(a_{ij})_{i,j \in \ZZ}$ with respect to the canonical basis in $\ell^p(\ZZ)$. 
For band operators with band-width $\omega$, this matrix is only supported on the $k$-th diagonals with $-\omega\le k\le \omega$.

\begin{remark}\label{rem:band-width}
Whenever necessary, we consider for some index set $\II \subset \ZZ$ the space $\ell^p(\II)$ as a subspace of $\ell^p(\ZZ)$.
  Then the terms in Definition~\ref{def:band-width} naturally carry over to operators $A$ on $\ell^p(\II)$ by associating them with the restriction onto the subspace $\ell^p(\II)$ of their canonical extension by zero.
\end{remark}

Here is one fundamental fact about band operators:

\begin{proposition}\label{prop:kurbatov}
    Let $B$ be a band operator on $\ell^p(\II)$ for some $\II \subset \ZZ$ and $p \in [1,\infty]$.
    If~$B$ is invertible, then $B$ is also invertible as an operator on $\ell^q(\II)$ for all $q \in [1,\infty]$.
\end{proposition}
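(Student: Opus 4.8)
The plan is to reduce everything to a single structural fact: the inverse of an invertible band operator again has rapidly (indeed geometrically) decaying diagonals, at a rate that does not see the underlying $\ell^q$-space. Concretely, I would introduce the \emph{Wiener algebra} $\Wcal$ of matrices $C = (c_{ij})_{i,j\in\II}$ with absolutely summable diagonals, normed by $\norm{C}_{\Wcal} = \sum_{k} \sup_i |c_{i,i+k}|$. Every $C \in \Wcal$ acts as a bounded operator on each $\ell^q(\II)$, $q \in [1,\infty]$, with $\norm{C}_{\ell^q\to\ell^q} \le \norm{C}_{\Wcal}$ by Schur's test, and a band operator trivially lies in $\Wcal$. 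The whole argument then rests on showing that $B^{-1} \in \Wcal$. Granting this, the remaining (easy) step finishes: both $B$ and $C := B^{-1}$ are bounded on every $\ell^q(\II)$, the matrix identities $BC = CB = I$ hold entrywise (the entry sums converge absolutely, since $B$ is banded and $C$ has uniformly summable rows and columns), and an entrywise identity between operators bounded on $\ell^q(\II)$ is an operator identity there. Hence $B$ is invertible on $\ell^q(\II)$ for every $q$.

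For the decay estimate I would first treat $\ell^2(\II)$. Assuming $B$ is invertible there, set $A := B^*B$, which is self-adjoint and positive, banded with band-width $2\omega$, and invertible with $\spec(A) \subset [\delta, \norm{B}^2]$ where $\delta = \norm{B^{-1}}_{\ell^2}^{-2} > 0$. Approximating $t \mapsto 1/t$ on $[\delta, \norm{B}^2]$ by the degree-$m$ polynomials $p_m$ of best uniform approximation gives $\norm{1/t - p_m}_{\infty,[\delta,\norm{B}^2]} \le c\,\rho^{m}$ for some $\rho \in (0,1)$ fixed by the condition number, and the functional calculus yields $\norm{A^{-1} - p_m(A)}_{\ell^2} \le c\,\rho^{m}$. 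Since $p_m(A)$ is banded of width $2m\omega$, comparing entries with $|i-j| > 2m\omega$ gives $|(A^{-1})_{ij}| \le c\,\rho^{m}$, and optimizing $m$ against $|i-j|$ produces $|(A^{-1})_{ij}| \le C \lambda^{|i-j|}$ with $\lambda \in (0,1)$. Finally $B^{-1} = A^{-1}B^*$ is the product of a geometrically decaying matrix with a banded one, hence is itself geometrically decaying, so $B^{-1} \in \Wcal$.

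The main obstacle is passing from invertibility on an arbitrary $\ell^p(\II)$ to invertibility on $\ell^2(\II)$: for general (non-band) operators such a transfer is simply false, so this is where the band structure must do real work. Transposition removes half of the range, since $B^{\mathsf{T}}$ is again a band operator and $B$ is invertible on $\ell^p(\II)$ if and only if $B^{\mathsf{T}}$ is invertible on $\ell^{p'}(\II)$; it therefore suffices to treat $p \in [1,2]$. The genuinely substantive input is then the $p$-independence of the spectrum of a band operator, which is the core output of the limit-operator theory of band-dominated operators (e.g. \cite{Lindner.2006,Rabinovich.2004}): invertibility on any $\ell^p(\II)$ is characterised by invertibility of all limit operators on $\ell^\infty$, a condition in which $p$ does not appear. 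Invoking this, invertibility on the given $\ell^p(\II)$ forces invertibility on $\ell^2(\II)$, the decay estimate of the second paragraph applies, $B^{-1} \in \Wcal$, and the easy direction of the first paragraph completes the proof.
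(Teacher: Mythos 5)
Your reduction from general $p$ to $p=2$ contains the one genuine gap. You assert that ``invertibility on any $\ell^p(\II)$ is characterised by invertibility of all limit operators on $\ell^\infty$''; this is false --- limit operators characterise \emph{Fredholmness}, not invertibility (this is exactly Lemma~\ref{lem:fundamental}). The unilateral shift $(Sx)_n = x_{n-1}$ on $\ell^p(\ZZ_+)$ is a band operator whose only limit operator is the bilateral shift, which is invertible on $\ell^p(\ZZ)$ and injective on $\ell^\infty(\ZZ)$, yet $S$ is injective and not surjective, hence not invertible on any $\ell^p(\ZZ_+)$. So the condition you invoke does not see the difference between Fredholm and invertible. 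What your bridge actually needs is that invertibility itself is $p$-independent --- but that is precisely the statement of Proposition~\ref{prop:kurbatov}, so invoking it is circular. A non-circular version of your route exists, but it requires, on top of the Fredholm characterisation, the $p$-independence of the kernel and of a complement of the range of a Fredholm band-dominated operator (Rabinovich--Roch--Silbermann, Lindner); that is the hard content and must be imported explicitly, since it does not follow from the limit-operator criterion for Fredholmness alone. (Two smaller points: for a general index set $\II\subset\ZZ$ the limit-operator machinery is formulated on $\ZZ$ or $\ZZ_\pm$, so one should first extend $B$ by the identity on $\ell^p(\ZZ\setminus\II)$, as the paper does; and your transposition step needs a word for $p=\infty$, where one argues via the pre-adjoint on $\ell^1$.)

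The remainder of your argument is correct and genuinely different from the paper's. For $p=2$ the Demko--Moss--Smith argument (Chebyshev approximation of $t\mapsto 1/t$ on $\spec(B^*B)$, comparison of $p_m(B^*B)$ with $(B^*B)^{-1}$ off the band) does yield exponential off-diagonal decay of $B^{-1}$, hence $B^{-1}\in\Wcal$, and the passage from $B^{-1}\in\Wcal$ to invertibility on every $\ell^q(\II)$ is routine as you describe. The paper instead quotes the inverse-closedness of the Wiener algebra (Kurbatov; Rabinovich--Roch--Silbermann, Corollary 2.5.4) as a black box; that theorem already packages both the decay of the inverse and the transfer between all the spaces $\ell^p$. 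Your second paragraph is thus a nice self-contained substitute for that citation in the Hilbert-space case, but the step from the given $\ell^p(\II)$ down to $\ell^2(\II)$ is exactly where the cited theorem does work that your sketch, as written, does not.
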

\begin{proof}
	First, we identify the Banach space $\ell^p(\ZZ)$ with the $p$-direct sum $\ell^p(\JJ) \oplus \ell^p(\II)$
	where $\JJ \coloneqq  \ZZ \setminus \II$
	and the norm is given by 
        \begin{equation*}
          \norm{x \oplus y} \coloneqq  
          \begin{cases}
            \big( \norm{x}^p_{\ell^p(\JJ)} 
            + \norm{y}^p_{\ell^p(\II)})^{\frac{1}{p}} &\quad\text{if } p < \infty\,,\\[0.5em]
            \max \big\{
            \norm{x}_{\ell^\infty(\JJ)} , 
            \norm{y}_{\ell^\infty(\II)}  \big\}  &\quad\text{if } p  = \infty \,.
          \end{cases}
        \end{equation*}
    Now, assume that~$B$ is invertible on $\ell^p(\II)$.
    We extend the operator~$B$ to an invertible operator~$A$ on~$\ell^p(\ZZ)$ via
    \begin{equation*}
        A \coloneqq  
        \begin{pmatrix}
            \one_{\JJ} & 0 \\
            0 & B
        \end{pmatrix}
    \end{equation*}
    with respect to the direct decomposition $\ell^p(\ZZ)  = \ell^p(\JJ) \oplus \ell^p(\II)$.
    Here, $\one_{\JJ}$ denotes the identity on $\ell^p(\JJ)$.
    Note that, as a band operator, $A$ is an element of the so-called \emph{Wiener algebra} $\Wcal$, see, e.g.,~\cite[Section 3.7.3]{Lindner.2009}.
    Due to~\cite[5.2.10]{Kurbatov.1999}, the algebra $\Wcal$ is closed under taking inverses; a concise proof of this fact can also be found in~\cite[Corollary~2.5.4]{Rabinovich.2004}.
    As an element of $\Wcal$, the inverse $A^{-1}$ acts boundedly on every space $\ell^q(\ZZ)$, $q \in [1,\infty]$.
    Consequently, the operator~$A$ is invertible on all spaces~$\ell^q(\ZZ)$, $q \in [1,\infty]$.
    Moreover, we have
    \begin{equation*}
        A^{-1} 
        = 
        \begin{pmatrix}
            \one_{\JJ} & 0 \\
            0 & B^{-1}
        \end{pmatrix} \,.
    \end{equation*}
    Finally, we conclude that $B^{-1}$ is a bounded operator on $\ell^q(\II)$ for all $q \in [1,\infty]$.
\end{proof}

Back to the FSM: assuming invertibility of $A$ on $\ell^p(\ZZ)$, the applicability of the FSM is equivalent to the uniform boundedness of the inverses $A_n^{-1}$, a concept that is also known as \emph{stability}.
\begin{definition}[Stability]\label{def:stability}
    A sequence of operators $(A_n)_{n\in\NN}$ defined on $\ell^p(\ZZ)$ or $\ell^p(\ZZ_+)$ for $p \in [1,\infty]$ is called \textit{stable} if there exists~$n_0\in\NN$ such that for all $n\ge n_0$, the operators $A_n$ are invertible and their inverses are uniformly bounded.
\end{definition}
The basic result connecting the notions of applicability and stability is known as \emph{Polski's~theorem}, cf.~\cite[Theorem~1.4]{Hagen.2001},
and it says that $(A_n)$ is applicable to $A$ if and only if it is stable and $A$ is invertible.

Now, stability, and hence applicability, of the FSM sequence $(A_n)$ in~\eqref{eq:submatricesFSM} is closely connected to the following entrywise limits
\begin{equation}\label{eq:ass_matrix}
(a_{i+l_n,j+l_n})_{i,j=0}^\infty\ \xrightarrow{n\rightarrow \infty}\ L_+\quad\textrm{and}\quad (a_{i+r_n,j+r_n})_{i,j=-\infty}^0\ \xrightarrow{n\rightarrow \infty}  R_-
\end{equation}
of one-sided infinite submatrices of $A$, where we 
consider sequences $(l_n)$ and $(r_n)$ with $\lim_{n \to \infty} l_n = -\infty$ and $\lim_{n \to \infty} r_n = \infty$ such that the limits in~\eqref{eq:ass_matrix} exist. 
The following result summarises the aforementioned connections and is a standard outcome regarding the applicability of the FSM for band operators. 

\begin{lemma}[{\cite[Lemma 1.2]{Chandler-WildeLindner.2016}, \cite[Theorem 2.3]{Rabinovich.2008}}]\label{lem:FSMappl}
\begin{enumerate}[label=(\alph*)]
 \item\label{it:lemApplTwoSided} Let $A$ be a band operator on $\ell^p(\ZZ)$ for $p\in[1,\infty]$.  
Then the following are equivalent:
\begin{enumerate}[label=(\roman*)]
  \item\label{it:FSMAppl} The FSM is applicable to $A$.
  \item\label{it:seqStable} The sequence $(A_n)$ with $A_n = (a_{ij})_{i,j=-n}^{n}$ is stable.
\item\label{it:AandLimOpsInv} The operator $A$ and the limits $L_+$ and $R_-$ from~\eqref{eq:ass_matrix} are invertible for all suitable sequences $(l_n)$ and $(r_n)$. 
\end{enumerate}

\item\label{it:lemApplOneSided} Let $A_+$ be a band operator on $\ell^p(\ZZ_+)$ for $p\in[1,\infty]$, where $\ZZ_+ \coloneqq \{k\in\ZZ:k\ge 0\}$.  
Then the following are equivalent:
\begin{enumerate}[label=(\roman*)]
\setcounter{enumii}{3}
  \item\label{it:FSMAppl+} The FSM is applicable to $A_+$.
  \item\label{it:seqStable+} The sequence $(A_n)$ with $A_n = (a_{ij})_{i,j=0}^{n}$ is stable.
\item\label{it:AandLimOpsInv+} The operator $A_+$ and the limits $R_-$ from~\eqref{eq:ass_matrix} are invertible for all suitable sequences $(r_n)$. 
\end{enumerate}
\end{enumerate}
\end{lemma}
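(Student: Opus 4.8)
The plan is to establish the two chains of equivalences by the limit operator method, proving the two-sided part~\ref{it:lemApplTwoSided} in full and reading off the one-sided part~\ref{it:lemApplOneSided} as the degenerate case in which the left edge is frozen at the index~$0$.

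First I would settle \ref{it:FSMAppl}~$\Leftrightarrow$~\ref{it:seqStable}. Writing $P_n$ for the canonical projection onto the coordinates $-n,\dots,n$, the finite sections are $A_n = P_n A P_n$, and Polski's theorem identifies applicability of $(A_n)$ to $A$ with the conjunction of stability and invertibility of $A$. Hence it remains to show that stability of $(A_n)$ already forces $A$ to be invertible. Since $P_n \to I$ strongly, we have $A_n \to A$ and $A_n^* \to A^*$ strongly, and a uniformly bounded sequence of invertible operators whose terms and adjoints converge strongly has an invertible strong limit: the uniform lower bound $\norm{A_n y}\ge c\norm{y}$ on the (finite-dimensional) ranges passes to the estimate $\norm{Ax}\ge c\norm{x}$ and, via the adjoints, to $\norm{A^*x}\ge c\norm{x}$, giving injectivity together with closed dense range. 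For the endpoint cases $p\in\{1,\infty\}$, where this Hilbert-space reasoning is unavailable, I would instead transfer invertibility of $A$ from $p=2$ through the Wiener-algebra argument already used in Proposition~\ref{prop:kurbatov}, and run the sequence version of that argument for stability.

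The core is \ref{it:seqStable}~$\Leftrightarrow$~\ref{it:AandLimOpsInv}. Here I would invoke the limit operator characterization of stability of the finite section sequence $(A_n)$: the sequence is stable precisely when every operator arising as an entrywise limit of recenterings $S^{-k_n}A_n S^{k_n}$ is invertible. These limits split into three regimes according to the position of the shift relative to the two edges $-n$ and $n$. Recentering deep in the bulk, with $k_n-(-n)\to\infty$ and $n-k_n\to\infty$, reproduces exactly the limit operators of $A$; recentering at bounded distance from the left edge yields the half-line operators $L_+$ on $\ell^p(\ZZ_+)$ of~\eqref{eq:ass_matrix}; and recentering near the right edge yields the operators $R_-$. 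Thus stability is equivalent to invertibility of all limit operators of $A$ together with all $L_+$ and all $R_-$. To reduce the bulk family to the single clause ``$A$ invertible'', I would use that $A$ is a rich band operator, so that $A$ invertible $\Rightarrow$ $A$ Fredholm $\Rightarrow$ (by the limit operator theorem) every limit operator of $A$ is invertible; combined with the implication stability~$\Rightarrow$~$A$ invertible from the previous step, the bulk contributes nothing beyond invertibility of $A$, and condition~\ref{it:AandLimOpsInv} results.

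The one-sided statement~\ref{it:lemApplOneSided} is the same argument with the left edge held fixed: no left-edge recentering is possible, the bulk again collapses to invertibility of $A_+$, and only the operators $R_-$ survive, yielding \ref{it:seqStable+}~$\Leftrightarrow$~\ref{it:AandLimOpsInv+}. I expect the genuine difficulty to lie in the bulk--edge decomposition of the limit set and, above all, in making it uniform over all $p\in[1,\infty]$: for $p\in\{1,\infty\}$ one loses compactness and the clean adjoint calculus, so the whole argument must be carried out in the \P-convergence framework, with richness securing uniform boundedness of the inverses of the limit operators, exactly as in~\cite{Chandler-WildeLindner.2016, Rabinovich.2008}.
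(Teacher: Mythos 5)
Your proposal is correct and follows exactly the standard limit-operator route of the sources the paper cites for this lemma (\cite[Lemma 1.2]{Chandler-WildeLindner.2016}, \cite[Theorem 2.3]{Rabinovich.2008}); the paper itself gives no proof but imports the result verbatim. Your decomposition of the snapshots of $(A_n)$ into bulk, left-edge and right-edge regimes, the collapse of the bulk family to invertibility of $A$ via Fredholmness, and the caveats about $p\in\{1,\infty\}$ and uniform boundedness of the inverses of the limit operators all match that standard argument.
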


\subsection{Limit Operators}\label{sec:invAndLimOps}
  We now focus on an operator-theoretical tool in order to reformulate conditions~\ref{it:AandLimOpsInv} and \ref{it:AandLimOpsInv+} of Lemma~\ref{lem:FSMappl}: the so-called \emph{limit operators}, cf.~\cite{Lindner.2006,Rabinovich.1998,Rabinovich.2004}.
In the following, let $\ZZ_- \coloneqq -\ZZ_+$. Note that both sets, $\ZZ_-$ and $\ZZ_+$, include zero.

\begin{definition}[Limit Operators and Compressions]\label{def:limOps}
  Let $A$ be a band operator on $\ell^p(\mathbb{I})$ for $p\in[1,\infty]$ and $\mathbb{I}\in\{\ZZ_+,\ZZ_-,\ZZ\}$. 
  An operator $B\in\ell^p(\ZZ)$ with matrix representation $(b_{ij})_{i,j \in \ZZ}$ is called \emph{a limit operator of $A$} if there is a sequence  $h=(h_n)_{n\in\NN} \subset \ZZ$ with 
  $\lim\limits_{n\rightarrow\infty} |{h}_n| = \infty$ and
  \[ a_{i+h_n,j+h_n}\xrightarrow{n \rightarrow \infty} b_{ij} \]  
  for all $i,j\in\ZZ$.
  In this case, we also write $A_h  \coloneqq B$ and say that $h$ is the \emph{corresponding sequence to $B$}. 
  We denote the set of all limit operators of $A$ by $\Lim(A)$.
  For~$A_h\in\Lim(A)$, we write~$A_h\in\Lim_+(A)$ or $A_h\in\Lim_-(A)$ if the corresponding sequence $h$ tends to $+\infty$ or $-\infty$, respectively.
  Moreover, if $\II = \ZZ$, we call both operators
  \begin{equation*}
   A_+ \coloneqq (a_{ij})_{i,j = 0}^\infty \quad\text{and}\quad A_- \coloneqq (a_{ij})_{i,j = -\infty}^0
  \end{equation*}
  \emph{(one-sided) compressions of} $A$. Then we have the identities $\Lim_\pm(A) = \Lim(A_\pm)$.
\end{definition} 

In fact, the operators $L_+$ and $R_-$ from~\eqref{eq:ass_matrix} correspond to the one-sided compressions of the limit operators $A_{(l_n)}$ and $A_{(r_n)}$.
Evidently, a limit operator $A_h$ of a band operator $A$ is again a band operator with the same band-width.

We now reformulate Lemma~\ref{lem:FSMappl} in the language of limit operators.

\begin{prop}[{\cite[Lemma 1.2]{Chandler-WildeLindner.2016}, \cite[Theorem 2.3]{Rabinovich.2008}}]\label{prop:FSMCondition}
Let $A$ be a band operator on $\ell^p(\ZZ)$ for $p\in[1,\infty]$.
Then the FSM for~$A$ is applicable if and only if
\begin{enumerate}[label=(\alph*)]
\item\label{it:AinvLp} $A$ is invertible on $\ell^p(\ZZ)$,
\item for all $R\in\Lim_+(A)$ the compressions $R_-$ are invertible on $\ell^p(\ZZ_-)$, and 
\item for all $L\in\Lim_-(A)$ the compressions $L_+$ are invertible on $\ell^p(\ZZ_+)$.
\end{enumerate}

Now let $A_+$ be a band operator on $\ell^p(\ZZ_+)$ for $p\in[1,\infty]$.
Then the FSM for~$A_+$ is applicable if and only if 
\begin{enumerate}[label=(\alph*)]
\setcounter{enumi}{3}
\item\label{it:AinvLp+} $A_+$ is invertible on $\ell^p(\ZZ_+)$ and
\item for all $R\in\Lim_+(A)$ the compressions $R_-$ are invertible on $\ell^p(\ZZ_-)$.
\end{enumerate}
\end{prop}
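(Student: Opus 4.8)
The plan is to read off both equivalences from Lemma~\ref{lem:FSMappl} by translating its conditions~\ref{it:AandLimOpsInv} and~\ref{it:AandLimOpsInv+} — stated in terms of the entrywise matrix limits $L_+$ and $R_-$ in~\eqref{eq:ass_matrix} — into the language of limit operators. The single point that needs an argument is the correspondence announced just before the statement: the family of all $L_+$ occurring in~\eqref{eq:ass_matrix} along sequences $l_n\to-\infty$ coincides with the family of compressions $\{L_+ : L\in\Lim_-(A)\}$, and likewise all $R_-$ occurring along $r_n\to+\infty$ form exactly $\{R_- : R\in\Lim_+(A)\}$. Once this is established, the two equivalences are immediate substitutions.

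First I would check the easy inclusion. Given $L=A_{(l_n)}\in\Lim_-(A)$ with $l_n\to-\infty$, Definition~\ref{def:limOps} yields $a_{i+l_n,j+l_n}\to L_{ij}$ for all $i,j\in\ZZ$; restricting to $i,j\ge 0$ shows that the one-sided submatrices $(a_{i+l_n,j+l_n})_{i,j=0}^\infty$ converge entrywise to the compression $L_+=(L_{ij})_{i,j=0}^\infty$, so every such compression indeed appears as an $L_+$ in~\eqref{eq:ass_matrix}. The same computation handles $R_-$ with $r_n\to+\infty$.

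The converse inclusion is where I expect the only real, though standard, work. Suppose $(a_{i+l_n,j+l_n})_{i,j=0}^\infty\to L_+$ for some $l_n\to-\infty$; a priori this controls only the entries with $i,j\ge 0$. Since $A$ is a band operator, all its diagonals lie in $\ell^\infty(\ZZ)$, so for every fixed $i,j\in\ZZ$ the numbers $a_{i+l_n,j+l_n}$ are bounded in $n$; a diagonal extraction produces a subsequence $(l_{n_k})$ along which $a_{i+l_{n_k},j+l_{n_k}}$ converges for all $i,j\in\ZZ$. This defines a limit operator $L\coloneqq A_{(l_{n_k})}\in\Lim_-(A)$, and because the full sequence already converges on the block $i,j\ge 0$, its compression $L_+$ coincides with the prescribed limit. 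Hence every $L_+$ in~\eqref{eq:ass_matrix} is a compression of some $L\in\Lim_-(A)$; the analogous diagonal argument gives the statement for $R_-$ and $\Lim_+(A)$.

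With the correspondence in place, the two-sided claim follows directly: Lemma~\ref{lem:FSMappl}\ref{it:lemApplTwoSided} asserts that the FSM is applicable to $A$ if and only if $A$ is invertible and every $L_+$, $R_-$ from~\eqref{eq:ass_matrix} is invertible on $\ell^p(\ZZ_+)$ and $\ell^p(\ZZ_-)$, respectively, which by the above is precisely conditions~\ref{it:AinvLp}, (b), and (c). For the one-sided statement I would argue identically from Lemma~\ref{lem:FSMappl}\ref{it:lemApplOneSided}, noting that the left end of each truncation $A_n=(a_{ij})_{i,j=0}^n$ is pinned at the origin by the Dirichlet condition and therefore generates no left limit operators, so only sequences $r_n\to+\infty$ contribute; using $\Lim_\pm(A)=\Lim(A_\pm)$ from Definition~\ref{def:limOps}, the relevant $R_-$ are exactly the compressions of $R\in\Lim_+(A)$, which yields conditions~\ref{it:AinvLp+} and (e).
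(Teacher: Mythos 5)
Your proposal is correct and follows exactly the route the paper intends: the paper states this proposition as a direct reformulation of Lemma~\ref{lem:FSMappl}, justified only by the one-line remark after Definition~\ref{def:limOps} that the matrices $L_+$ and $R_-$ from~\eqref{eq:ass_matrix} are the one-sided compressions of the limit operators $A_{(l_n)}$ and $A_{(r_n)}$, and otherwise defers to the cited references. You supply the standard verification of that correspondence (the trivial inclusion by restricting indices, and the converse via boundedness of the entries of a band operator plus a diagonal extraction), which is precisely the missing routine step.
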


The restriction to a particular choice of $p$ in Proposition~\ref{prop:FSMCondition} can be dropped, thanks to Proposition~\ref{prop:kurbatov}.
In subsequent parts of this article, this result will allow us to fall back onto the Hilbert space case~$p = 2$ whenever needed.

Recall that a bounded linear operator $A \colon X \rightarrow Y$
between Banach spaces $X$ and $Y$
is called a \emph{Fredholm operator} if
the kernel $\Ker(A)$
and the cokernel $Y/\Ran(A)$ are finite-dimensional.
Furthermore, we define the \emph{essential spectrum of $A$} by
\begin{equation*}
  \sigma_\ess(A) \coloneqq  \{ E \in \CC  : A - E  \text{ is not a Fredholm operator} \}\,.
\end{equation*}

The next lemma establishes the equivalence of the Fredholmness of a band operator $A$, the invertibility of its limit operators, see~\cite{Lindner.2014,Rabinovich.1998}, or, alternatively, their injectivity on the space $\ell^\infty(\mathbb{Z})$, see~\cite{Wilde-Lindner.2008,Chandler-WildeLindner.2011}.
The lemma previously appeared in this form in~\cite{Lindner.2018}.

\begin{lemma}[{\cite[Lemma 2.6]{Lindner.2018}}]\label{lem:fundamental}
    Let $\II \in \{\ZZ, \ZZ_+, \ZZ_-, \NN\}$ and $p \in [1,\infty]$.
    For a band operator~$A$, the following are equivalent:
    \begin{enumerate}[label=(\roman*)]
      \item\label{it:AFredholm} $A$ is a Fredholm operator on $\ell^p(\II)$.
      \item\label{it:allLimOpsInvertible} All limit operators of $A$ are invertible on $\ell^p(\ZZ)$.
      \item\label{it:allLimOpsInjective} All limit operators of $A$ are injective on $\ell^\infty(\ZZ)$.
    \end{enumerate}
\end{lemma}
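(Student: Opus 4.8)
The plan is to run the cycle \ref{it:AFredholm}~$\Rightarrow$~\ref{it:allLimOpsInvertible}~$\Rightarrow$~\ref{it:allLimOpsInjective}~$\Rightarrow$~\ref{it:AFredholm} inside the standard calculus of limit operators. Two observations organise everything. First, every limit operator $A_h$ is again a band operator on $\ell^p(\ZZ)$ of the same band-width as $A$, so Proposition~\ref{prop:kurbatov} (with $\II=\ZZ$) applies to each $A_h$ \emph{individually}: a single limit operator is invertible on $\ell^p(\ZZ)$ if and only if it is invertible on $\ell^q(\ZZ)$ for every $q\in[1,\infty]$. Second, the family $\Lim(A)$ is \emph{self-similar}: a diagonal extraction shows that every limit operator of a limit operator $A_h$ is itself a limit operator of $A$, i.e. $\Lim(B)\subseteq\Lim(A)$ whenever $B\in\Lim(A)$. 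These two facts let us pass freely between $\ell^p$ and $\ell^\infty$ and between $A$ and its operators at infinity.

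For \ref{it:AFredholm}~$\Rightarrow$~\ref{it:allLimOpsInvertible} I would exploit that a Fredholm band operator admits a regulariser $R$ with $RA=I+K$ and $AR=I+K'$ for compact $K,K'$. Translating this identity along the defining sequence $h$ and passing to the strong limit, the compact remainders are shifted off to infinity and disappear in the limit, so the translated regularisers converge to a genuine two-sided inverse of $A_h$. Hence every $A_h$ is invertible on $\ell^p(\ZZ)$.

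The step \ref{it:allLimOpsInvertible}~$\Rightarrow$~\ref{it:allLimOpsInjective} is immediate from the first observation: invertibility of $A_h$ on $\ell^p(\ZZ)$ upgrades, via Proposition~\ref{prop:kurbatov}, to invertibility on $\ell^\infty(\ZZ)$, which in particular forces injectivity on $\ell^\infty(\ZZ)$.

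The substantial direction is \ref{it:allLimOpsInjective}~$\Rightarrow$~\ref{it:AFredholm}, and I expect this to be the main obstacle. Fix $B\in\Lim(A)$; by self-similarity every limit operator of $B$ again lies in $\Lim(A)$ and is therefore injective on $\ell^\infty$ by hypothesis. One then invokes the Favard-type dichotomy valid for such rich families (richness being automatic for the periodic operators considered here, whose translation orbit is precompact): if $B$ and all of its limit operators are injective on $\ell^\infty(\ZZ)$, then $B$ is in fact bounded below on $\ell^\infty(\ZZ)$. Running the same argument for the transposed operators---whose limit operators are exactly the transposes of the limit operators of $A$, hence equally injective---controls the cokernel and yields that each $B\in\Lim(A)$ is boundedly invertible. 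The delicate point, historically the ``core issue'' of the theory, is that these inverses are \emph{uniformly} bounded across the whole family $\Lim(A)$; granting this uniformity, a standard regularisation argument glues the local inverses at infinity into a global regulariser for $A$, proving $A$ to be Fredholm on $\ell^p(\II)$ and closing the cycle.
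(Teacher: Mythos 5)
The paper does not actually prove this lemma: it is imported verbatim from \cite[Lemma~2.6]{Lindner.2018}, and the surrounding remark explicitly delegates the proof to \cite{Rabinovich.1998}, \cite{Wilde-Lindner.2008} and \cite{Lindner.2014}. So there is no in-paper argument to compare yours against; what you have written is a reconstruction of the architecture of those cited proofs, and as such it is largely faithful. Your two organising observations (each limit operator is a band operator of the same band-width, so Proposition~\ref{prop:kurbatov} applies to it individually; and $\Lim(B)\subseteq\Lim(A)$ for $B\in\Lim(A)$) are correct, the step \ref{it:allLimOpsInvertible}$\Rightarrow$\ref{it:allLimOpsInjective} is exactly right, and your description of \ref{it:AFredholm}$\Rightarrow$\ref{it:allLimOpsInvertible} via translating a regulariser is the standard argument --- with the caveat that for $p=\infty$ ``compact'' must be read as $\mathcal{P}$-compact, since genuinely compact remainders are not available there; only then do the shifted remainders tend $\mathcal{P}$-strongly to zero. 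You also do not address the cases $\II\in\{\ZZ_+,\ZZ_-,\NN\}$; the standard fix is to extend $A$ by the identity to $\ell^p(\ZZ)$ as in the proof of Proposition~\ref{prop:kurbatov}, which changes neither Fredholmness nor the set of limit operators in the relevant direction.

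The one place where your sketch asserts something that does not follow as stated is the cokernel control in \ref{it:allLimOpsInjective}$\Rightarrow$\ref{it:AFredholm}. Hypothesis \ref{it:allLimOpsInjective} says that the limit operators of $A$ are injective on $\ell^\infty(\ZZ)$; it says nothing about their transposes, and ``hence equally injective'' is not automatic --- injectivity of $B$ on $\ell^\infty$ does not by itself give injectivity of $B^{T}$ on $\ell^\infty$. In the literature this is resolved differently: either one proves directly that Favard's condition forces every limit operator to be \emph{invertible} (not merely bounded below) on $\ell^\infty$, using that the set $\Lim(A)$ is closed under taking limit operators together with a lower-norm/duality argument between $\ell^1$ and $\ell^\infty$ for operators in the Wiener algebra, or one invokes the uniform-invertibility theorem of \cite{Lindner.2014}, after which surjectivity comes for free. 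Relatedly, your parenthetical that richness holds ``for the periodic operators considered here'' undersells the situation: the lemma is stated for arbitrary band operators, and richness is automatic for all of them by a diagonal extraction on the bounded diagonal sequences, so no appeal to periodicity is needed or appropriate. Finally, be aware that both genuinely hard ingredients --- Favard sufficiency and the uniform boundedness of the inverses over $\Lim(A)$ --- are invoked rather than proved in your outline; that is acceptable here only because the paper itself treats the entire lemma as a citation, but your proof is then a proof modulo exactly those references.
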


\begin{remark}
Actually, it is Lemma~\ref{lem:fundamental}, whose first versions (with proof) go back to \cite{Rabinovich.1998} and somehow finalise in \cite{Wilde-Lindner.2008,Lindner.2014}, where limit operators naturally enter the scene, and they only reappear in theorems on applicability and stability of operator sequences because the latter is equivalent to Fredholmness of an associated operator.
\end{remark}

Here is a simple consequence of Lemma~\ref{lem:fundamental} about spectra and essential spectra of band operators and their limit operators, see also~\cite[Corollary 12]{Lindner.2014}:

\begin{proposition}\label{prop:essSpecEqualsSpecnew}
	Let $A$ be a band operator on $\ell^p(\II)$ with $\II \in \{\ZZ, \ZZ_+, \ZZ_-, \NN\}$.
	Then
	\begin{equation*}
		\sigma_\ess(A) = \bigcup_{B \in \Lim(A)} \sigma(B) \, .
	\end{equation*}
	In particular, we have:
		\begin{enumerate}[label=(\alph*)]
			\item\label{it:equal_spectra_ess} If $\sigma(B) = \sigma(B^\prime)$ for all $B, B^\prime \in \Lim(A)$, then $\sigma_\ess(A) = \sigma(B)$.
			\item\label{it:spectra_of_limops} If $B\in\Lim(A)$, then $\sigma(B)\subset\sigma_\ess(A)\subset\sigma(A)$.
			\item\label{it:spec_is_ess} If $\II=\ZZ$ and $A \in \Lim(A)$, then $\sigma(A) = \sigma_\ess(A)$.
		\end{enumerate}
\end{proposition}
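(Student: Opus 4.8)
The plan is to reduce the entire statement to Lemma~\ref{lem:fundamental}, which already converts Fredholmness into invertibility of limit operators. By definition, $E \in \sigma_\ess(A)$ exactly when $A - E$ fails to be a Fredholm operator on $\ell^p(\II)$. Since $A$ is a band operator and a scalar multiple of the identity is a band operator of band-width zero, $A - E$ is again a band operator, so Lemma~\ref{lem:fundamental} applies verbatim to it: $A-E$ is Fredholm on $\ell^p(\II)$ if and only if all of its limit operators are invertible on $\ell^p(\ZZ)$.

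The one step that needs genuine care is identifying those limit operators. First I would check that $\Lim(A - E) = \{\,B - E : B \in \Lim(A)\,\}$, with unchanged corresponding sequences. Indeed, for any sequence $h = (h_n)$ with $|h_n|\to\infty$, the shifted matrix entries of $A - E$ are $a_{i+h_n,\,j+h_n} - E\,\delta_{i+h_n,\,j+h_n} = a_{i+h_n,\,j+h_n} - E\,\delta_{ij}$, because the Kronecker delta is invariant under the simultaneous shift of both indices. Hence the entrywise limit along $h$ exists for $A - E$ precisely when it exists for $A$, and in that case it equals $A_h - E$. This is where the proof could look subtler than it is, but the invariance of the diagonal under the shift makes it routine.

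Combining the two observations, $A - E$ is Fredholm if and only if $B - E$ is invertible on $\ell^p(\ZZ)$ for every $B \in \Lim(A)$, i.e.\ if and only if $E \notin \sigma(B)$ for all $B \in \Lim(A)$. Taking complements yields
\[
  \sigma_\ess(A) = \bigcup_{B \in \Lim(A)} \sigma(B)\,,
\]
which is the main assertion. I would add the remark that, by Proposition~\ref{prop:kurbatov}, each $\sigma(B)$ is independent of the chosen $p$ (every $B - E$ is a band operator), so the right-hand side is unambiguous and the equality holds on every $\ell^p(\ZZ)$.

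The three supplementary claims then drop out immediately. For \ref{it:equal_spectra_ess}, if all limit operators share one common spectrum, the union collapses to that single set. For \ref{it:spectra_of_limops}, the inclusion $\sigma(B) \subset \sigma_\ess(A)$ is read off directly from the union, while $\sigma_\ess(A) \subset \sigma(A)$ follows because invertibility of $A - E$ forces its Fredholmness, so $E\notin\sigma(A)$ implies $E\notin\sigma_\ess(A)$. Finally, for \ref{it:spec_is_ess}, the hypothesis $A \in \Lim(A)$ permits taking $B = A$ in \ref{it:spectra_of_limops}, giving $\sigma(A) \subset \sigma_\ess(A)$; together with $\sigma_\ess(A) \subset \sigma(A)$ this yields $\sigma(A) = \sigma_\ess(A)$. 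The only real obstacle is the identification of $\Lim(A-E)$ in the second paragraph; everything else is bookkeeping layered on top of Lemma~\ref{lem:fundamental}.
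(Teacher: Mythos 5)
Your proposal is correct and follows essentially the same route as the paper: both reduce the claim to Lemma~\ref{lem:fundamental} applied to $A-E$ and then read off the union formula, with parts \ref{it:equal_spectra_ess}--\ref{it:spec_is_ess} as immediate consequences. The only difference is that you make explicit the identification $\Lim(A-E)=\{B-E : B\in\Lim(A)\}$, which the paper leaves implicit; that is a reasonable bit of added care, not a divergence in method.
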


\begin{proof}
	By definition,
	$\energy \in \sigma_\ess(A)$ if and only if $A-\energy$ is not Fredholm.
	By the equivalence~\ref{it:AFredholm}$\Leftrightarrow$\ref{it:allLimOpsInvertible}
	in Lemma~\ref{lem:fundamental}, this is equivalent to $B-\energy$ not being invertible, i.e.,\ $\energy \in \sigma(B)$, for some $B \in \Lim(A)$.
\end{proof}

Lemma~\ref{lem:fundamental} is particularly useful when dealing with a band operator $A$ that additionally has the property~$A \in \Lim(A)$. 
All subsequent examples of Schrödinger operators will have this property, which is sometimes referred to as~\emph{self-similarity}, cf.~\cite{Wilde-Lindner.2008}.
The combination of Proposition~\ref{prop:kurbatov} with Lemma~\ref{lem:fundamental} leads to the following results that allow us to translate the invertibility problem of an operator into an injectivity problem of its limit operators.

\begin{corollary}\label{cor:fundamental}
    Let $A$ be a band operator with $A \in \Lim(A)$.
    Then the following are equivalent:
    \begin{enumerate}[label=(\roman*)]
        \item\label{it:LimInj} All limit operators of $A$ are injective on $\ell^\infty(\ZZ)$.
        \item\label{it:AInvAll} $A$ is invertible on~$\ell^p(\ZZ)$ for all  $p \in [1,\infty]$.
        \item\label{it:AInvSome} $A$ is invertible on~$\ell^p(\ZZ)$ for some $p \in [1,\infty]$.
    \end{enumerate}
\end{corollary}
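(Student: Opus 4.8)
The plan is to close a cycle of implications among \ref{it:LimInj}, \ref{it:AInvAll}, and \ref{it:AInvSome}, using only the two tools already at hand: Proposition~\ref{prop:kurbatov}, which makes invertibility of a band operator independent of the exponent $p$, and Lemma~\ref{lem:fundamental}, which ties Fredholmness to the limit operators. The single new ingredient compared to Lemma~\ref{lem:fundamental} is the self-similarity hypothesis $A\in\Lim(A)$; its whole purpose here is to upgrade the conclusion ``$A$ is Fredholm'' to ``$A$ is invertible''. I therefore expect the proof to be short, essentially a bookkeeping argument around Lemma~\ref{lem:fundamental} together with that one extra observation.

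First I would dispatch the two exponent-free equivalences. The implication \ref{it:AInvAll}$\Rightarrow$\ref{it:AInvSome} is trivial, and \ref{it:AInvSome}$\Rightarrow$\ref{it:AInvAll} is exactly Proposition~\ref{prop:kurbatov} applied with $\II=\ZZ$: invertibility of the band operator $A$ on one $\ell^p(\ZZ)$ forces invertibility on every $\ell^q(\ZZ)$. Next, for \ref{it:AInvAll}$\Rightarrow$\ref{it:LimInj}, I note that an invertible operator is in particular Fredholm on $\ell^p(\ZZ)$, so the implication \ref{it:AFredholm}$\Rightarrow$\ref{it:allLimOpsInjective} of Lemma~\ref{lem:fundamental} (with $\II=\ZZ$) immediately yields that every limit operator of $A$ is injective on $\ell^\infty(\ZZ)$.

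The remaining implication, \ref{it:LimInj}$\Rightarrow$\ref{it:AInvAll}, is where the self-similarity enters and is the only step with any content. Assuming all limit operators of $A$ are injective on $\ell^\infty(\ZZ)$, the implication \ref{it:allLimOpsInjective}$\Rightarrow$\ref{it:allLimOpsInvertible} of Lemma~\ref{lem:fundamental} shows that every limit operator is in fact invertible on $\ell^p(\ZZ)$. Because $A\in\Lim(A)$, the operator $A$ is itself one of these limit operators, hence $A$ is invertible on $\ell^p(\ZZ)$ for that $p$; one further pass through Proposition~\ref{prop:kurbatov} then promotes this to invertibility on all $\ell^q(\ZZ)$, giving \ref{it:AInvAll} and closing the cycle.

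If there is a subtlety to watch, it is purely a matter of keeping the domains straight: the limit operators live on the full line, so injectivity is always tested on $\ell^\infty(\ZZ)$, whereas the invertibility statements \ref{it:AInvAll}--\ref{it:AInvSome} concern $A$ on $\ell^p(\ZZ)$. Since $A$ is already a two-sided operator, no compression arguments are needed and the hypothesis $A\in\Lim(A)$ is available directly. The ``hard part'' is thus not a computation at all but the conceptual point that self-similarity is exactly what converts the Fredholm conclusion of Lemma~\ref{lem:fundamental} into full invertibility.
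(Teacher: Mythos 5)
Your proof is correct and follows essentially the same route as the paper: the $p$-independence is Proposition~\ref{prop:kurbatov}, the implication from invertibility to injectivity of all limit operators is ``invertible $\Rightarrow$ Fredholm'' plus Lemma~\ref{lem:fundamental}, and the converse uses Lemma~\ref{lem:fundamental} together with the self-similarity $A\in\Lim(A)$ to recover invertibility of $A$ itself. No issues.
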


\begin{proof}
  The equivalence~\ref{it:AInvAll}$\Leftrightarrow$\ref{it:AInvSome}
  is immediately given by Proposition~\ref{prop:kurbatov}.
  Since $A \in \Lim(A)$, the implication~\ref{it:LimInj}$\Rightarrow$\ref{it:AInvSome}
  follows from Lemma~\ref{lem:fundamental}.
  Using that an invertible operator is a Fredholm operator, we also have~\ref{it:AInvSome}$\Rightarrow$\ref{it:LimInj} by Lemma~\ref{lem:fundamental}.
\end{proof}

Note that Corollary~\ref{cor:fundamental} only handles operators that are defined on $\ell^p(\ZZ)$.
However, Lemma~\ref{lem:FSMappl} also relies on the invertibility of one-sided compressions.
Therefore, the next corollary gives the corresponding result.
For this, we consider only the case
$p = 2$ because there we are able to use the \emph{Hilbert space adjoint} $A^*$
for an operator~$A$ on the Hilbert space $\ell^2(\II)$ with $\II \subset \ZZ$.

\begin{corollary}\label{cor:injImpliesInvertibleForLimOps}
    Let $B$ be a self-adjoint invertible band operator on $\ell^2(\ZZ)$.
    \begin{enumerate}[label=(\alph*)]
      \item\label{it:B-inj} 
        If the compression $B_-$ is injective on $\ell^\infty(\ZZ_-)$, then $B_-$ is invertible on~$\ell^p(\ZZ_-)$ for all $p \in [1,\infty]$.
      \item\label{it:B+inj} 
        If the compression $B_+$ is injective on $\ell^\infty(\ZZ_+)$, then $B_+$ is invertible on~$\ell^p(\ZZ_+)$ for all $p \in [1,\infty]$.
    \end{enumerate}
\end{corollary}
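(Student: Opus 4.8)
The plan is to prove~\ref{it:B-inj}; statement~\ref{it:B+inj} follows by the verbatim symmetric argument with $\ZZ_+$ and $\Lim_+$ in place of $\ZZ_-$ and $\Lim_-$. By Proposition~\ref{prop:kurbatov} it suffices to establish invertibility of the band operator $B_-$ on the single Hilbert space $\ell^2(\ZZ_-)$, since invertibility on all $\ell^p(\ZZ_-)$ is then automatic. There I would combine three properties of $B_-$: self-adjointness, Fredholmness, and injectivity. Self-adjointness is immediate: writing $P_-$ for the orthogonal projection of $\ell^2(\ZZ)$ onto $\ell^2(\ZZ_-)$, we have $B_- = P_- B|_{\ell^2(\ZZ_-)}$, so $(B_-)^* = P_- B^* P_-|_{\ell^2(\ZZ_-)} = B_-$ using $B^* = B$; equivalently $(B_-)_{ij} = b_{ij} = \overline{b_{ji}} = \overline{(B_-)_{ji}}$.

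Next I would establish Fredholmness of $B_-$ through the limit-operator criterion of Lemma~\ref{lem:fundamental}. Since $B$ is invertible on $\ell^2(\ZZ)$, it is Fredholm, so by the implication~\ref{it:AFredholm}$\Rightarrow$\ref{it:allLimOpsInjective} of Lemma~\ref{lem:fundamental} every operator in $\Lim(B)$ is injective on $\ell^\infty(\ZZ)$. The limit operators of the compression are precisely the limit operators of $B$ along sequences tending to $-\infty$, i.e. $\Lim(B_-) = \Lim_-(B) \subseteq \Lim(B)$ by the identity recorded in Definition~\ref{def:limOps}. Hence every limit operator of $B_-$ is injective on $\ell^\infty(\ZZ)$, and applying the reverse implication~\ref{it:allLimOpsInjective}$\Rightarrow$\ref{it:AFredholm} of Lemma~\ref{lem:fundamental} to the band operator $B_-$ on $\ell^2(\ZZ_-)$ shows that $B_-$ is Fredholm.

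It remains to convert the hypothesis into invertibility on $\ell^2(\ZZ_-)$. Because $\norm{x}_\infty \le \norm{x}_2$, we have $\ell^2(\ZZ_-) \subseteq \ell^\infty(\ZZ_-)$, so injectivity of $B_-$ on $\ell^\infty(\ZZ_-)$ forces injectivity on $\ell^2(\ZZ_-)$: any $\ell^2$-solution of $B_- x = 0$ already lies in $\ell^\infty(\ZZ_-)$ and is therefore zero. Now, since $B_-$ is a self-adjoint Fredholm operator, its cokernel satisfies $\ell^2(\ZZ_-)/\Ran(B_-) \cong \Ker((B_-)^*) = \Ker(B_-)$, so kernel and cokernel have equal dimension and the Fredholm index vanishes. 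An injective Fredholm operator of index zero has trivial kernel and, by index zero, trivial cokernel; as its range is closed, it is bijective and hence invertible on $\ell^2(\ZZ_-)$. Proposition~\ref{prop:kurbatov} then lifts this to invertibility on every $\ell^p(\ZZ_-)$, $p \in [1,\infty]$.

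The step I expect to require the most care is the identification $\Lim(B_-) = \Lim_-(B)$ feeding into Fredholmness: one must be certain that passing from $B$ to its one-sided compression introduces no limit operators beyond those already controlled by the invertibility of $B$, so that Fredholmness of $B_-$ is genuinely inherited. By contrast, the index-zero argument---although it is the precise point where self-adjointness is indispensable---is a routine consequence of self-adjointness on a Hilbert space, and the passage from $\ell^\infty$-injectivity to $\ell^2$-injectivity is elementary.
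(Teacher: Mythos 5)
Your proposal is correct and follows essentially the same route as the paper: reduce to $p=2$ via Proposition~\ref{prop:kurbatov}, deduce Fredholmness of $B_-$ from the limit-operator criterion of Lemma~\ref{lem:fundamental} together with $\Lim(B_-)=\Lim_-(B)\subset\Lim(B)$, and combine this with self-adjointness and $\ell^\infty$-injectivity to get bijectivity on $\ell^2(\ZZ_-)$. The only cosmetic difference is that you phrase the last step as an index-zero argument ($\mathrm{coker}\cong\Ker((B_-)^*)=\Ker(B_-)=0$), whereas the paper phrases it as dense range (from injectivity of the adjoint) plus closed range (from Fredholmness) — these are the same fact.
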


\begin{proof}
  We only prove~\ref{it:B-inj} since the proof of~\ref{it:B+inj} works completely analogously.
    For the proof of~\ref{it:B-inj}, we note that, due to Proposition~\ref{prop:kurbatov}, it suffices to consider $p = 2$. 
     As $B_-$ is injective on $\ell^\infty(\ZZ_-)$, it is also injective on the subset $\ell^2(\ZZ_-) \subset \ell^\infty(\ZZ_-)$.
    We will show that $B_-$ has a dense and closed range making $B_-$ also surjective. 
    
    \emph{The range of $B_-$ is dense in $\ell^2(\ZZ_-)$:}   
    Since $B$ is self-adjoint, its compression $B_-$ is also self-adjoint.
    Therefore, the adjoint $(B_-)^*$ is also injective on $\ell^2(\ZZ_-)$ which implies that the range of $B_-$ is dense in $\ell^2(\ZZ_-)$.

  \emph{The range of $B_-$ is closed in $\ell^2(\ZZ_-)$:}
    Since $B$ is invertible, it is in particular Fredholm on $\ell^2(\ZZ)$. 
    Consequently, Lemma~\ref{lem:fundamental}\ref{it:AFredholm}$\Rightarrow$\ref{it:allLimOpsInvertible} gives that all operators in $\Lim(B) \supset \Lim(B_-)$ are invertible on $\ell^2(\ZZ)$.
    As all limit operators of $B_-$ are invertible on $\ell^2(\ZZ)$,
    the implication~\ref{it:allLimOpsInvertible}$\Rightarrow$\ref{it:AFredholm} in Lemma~\ref{lem:fundamental} gives that $B_-$ is Fredholm. 
    In particular, $B_-$ has a closed range. \qedhere
\end{proof}

\section{Periodic Schrödinger Operators}\label{sec:periodSchr}
In this section, we will analyse periodic Schrödinger operators
and always assume the hypothesis below.

\begin{hypo}\label{hypo:PeriodicH}
 Let $ p \in [1,\infty]$
 and $H : \ell^p(\ZZ) \rightarrow \ell^p(\ZZ)$
 be given by
 	\begin{equation}
 	\label{eq:recursionPeriodicH}
 	(H x)_n =  x_{n+1} +   x_{n-1} + v(n) x_n \, , \quad n \in \ZZ \,
 	\end{equation}
 with the real-valued potential $v \colon \ZZ \to \RR$.
 Assume in addition
 that $v$ is periodic with period $\period$,
 which means $\period \in \NN$
 and $v(n+\period) = v(n)$ for all $n \in \ZZ$.
\end{hypo}

The potential $v$ as well as the operator $H$  from above are simply called \emph{$K$-periodic}. Many facts about such periodic Schrödinger operators
are known, see, e.g.,~\cite{Embree.2017,Puelz.2014,Teschl.2000}, such that this section mainly focuses on
the interpretation of these facts in the framework of limit operator theory. 

\subsection{Trace Condition for the Spectrum of Periodic Schrödinger Operators}\label{sec:traceConditionPeriodic}
If we choose 
an energy~$\energy \in \RR$ and a vector $x \in \ker(H-\energy)$, 
relation~\eqref{eq:recursionPeriodicH} yields the eigenvalue equation 
\begin{equation*}
  0 = ((H - E)x)_n = x_{n + 1} + x_{n - 1} + (v(n) - E) x_n\, , \quad n \in \ZZ \,.
\end{equation*}
This scalar three-term recurrence can be written as the vector-valued two-term recursion
	\begin{equation}
		\label{eq:Jac_recursion_formula2}
				\begin{pmatrix}
					  x_{n+1} \\
						x_n
				\end{pmatrix}	
 				=
 			\begin{pmatrix}
 		 		 \energy-v(n)  & -1 \\
 	   			  1 & 0
 			\end{pmatrix}
  			\begin{pmatrix}
 				 x_n \\
 				x_{n-1} 
 			\end{pmatrix}
 			~\text{ for all } n \in \ZZ.
	\end{equation}
The $2 \times 2$ matrix in~\eqref{eq:Jac_recursion_formula2} is called \emph{transfer matrix} and here denoted 
for each $n \in \ZZ$ by:
	\begin{equation}\label{eq:Transfer-Definition1}
		\trans(n,\energy)  \coloneqq 
	\begin{pmatrix}
 		\energy-v(n)   & -1 \\
 	     1 & 0
 	\end{pmatrix}
 	\,.
	\end{equation}
          
        The transfer matrices lie in the \emph{symplectic group}, which means that they satisfy
the relation~$\trans(n,\energy)^T \left(\begin{smallmatrix}
	0 & -1 \\ 1 & 0
  \end{smallmatrix}\right)
 \trans(n,\energy) 
  = \left(\begin{smallmatrix}
	0 & -1 \\ 1 & 0
  \end{smallmatrix}\right)$.
Especially, one knows that eigenvalues of a symplectic matrix always come in pairs $\{\lambda_i, \lambda_i^{-1} \}$ and, therefore, their determinant is $1$.
For a periodic potential with period $\period$, we define the \emph{monodromy matrix}
	\begin{equation}\label{eq:TransferMDefinition}
        M(\energy) \coloneqq  \trans({\period-1,\energy})\, \cdots \,\trans({1,\energy})\, \trans({0,\energy})
 	\,.
	\end{equation}
Obviously, $M(E)$ brings us from ${x_0\choose x_{-1}}$ to ${x_K\choose x_{K-1}}$, and then again from ${x_K\choose x_{K-1}}$ to ${x_{2K}\choose x_{2K-1}}$, and so on.
   When it is clear from the context that we only consider the energy~$E = 0$, we will write $T(n) \coloneqq T(n,0)$ and $M \coloneqq M(0)$.

As we have seen in Section~\ref{sec:invAndLimOps}, limit operators provide a useful tool for studying Fredholmness and invertibility of operators. 
The next lemma gives a complete description of the limit operators for periodic Schrödinger operators.

\begin{lemma}\label{lem:limOpsPeriodic}
  Assume Hypothesis~\ref{hypo:PeriodicH}. 
  \begin{enumerate}[label=(\alph*)]
    \item\label{it:limOpsPeriodic}
    Every limit operator of $H$ is again a $K$-periodic Schrödinger operator and
    \begin{equation*}
      \Lim(H) = \{ S^{-k} H S^k : k = 0,\dots,K - 1\} = \Lim_+(H) = \Lim_-(H) \,,
    \end{equation*}
    where $S$ denotes the right shift operator, i.e.\ $(S x)_n = x_{n - 1}$ for all $n \in \ZZ$.
  \item\label{it:limOpsPeriodicSpec} $\sigma(B) = \sigma(H)$ for all $B \in\Lim(H)$.
  \item\label{it:limOpsPeriodicTrace} If $B \in \Lim(H)$ with monodromy matrix~$M_B(E)$, then we have the identity $\tr(M_B(\energy)) = \tr(M(\energy))$ for all $\energy \in \RR$.
  \end{enumerate}
\end{lemma}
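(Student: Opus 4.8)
The plan is to establish the three parts in sequence, with part~(a) carrying the essential work and parts~(b) and~(c) following quickly from it.

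\emph{Part (a).} I would start from the tridiagonal matrix representation of $H$: its entries are $a_{ij}=v(i)$ when $i=j$, $a_{ij}=1$ when $|i-j|=1$, and $a_{ij}=0$ otherwise. For a sequence $h=(h_n)$ with $|h_n|\to\infty$, the shifted entry $a_{i+h_n,\,j+h_n}$ equals the constants $1$ or $0$ whenever $i\neq j$, irrespective of $n$, while on the diagonal it equals $v(i+h_n)$. Here the $K$-periodicity of $v$ is decisive: $v(i+h_n)$ depends only on the residue $h_n\bmod K\in\{0,\dots,K-1\}$, which takes finitely many values. Hence the entrywise limits defining a limit operator exist if and only if $h_n\bmod K$ is eventually constant, say equal to $k$, and in that case the limit operator $H_h$ is the Schrödinger operator with shifted potential $v(\cdot+k)$. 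A direct computation of $(S^{-k}HS^{k}x)_n=x_{n+1}+x_{n-1}+v(n+k)x_n$ identifies this operator as $S^{-k}HS^{k}$. Conversely, every residue $k$ is attained, and by both $h_n=nK+k\to+\infty$ and $h_n=k-nK\to-\infty$, so all $K$ operators lie in $\Lim_+(H)$ and in $\Lim_-(H)$. This yields $\Lim(H)=\Lim_+(H)=\Lim_-(H)=\{S^{-k}HS^{k}:k=0,\dots,K-1\}$, each member being manifestly $K$-periodic Schrödinger.

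\emph{Part (b).} Since $S$ acts as an invertible isometry on every $\ell^p(\ZZ)$, conjugation preserves invertibility: for $B=S^{-k}HS^{k}$ and any $\energy$, the operator $B-\energy=S^{-k}(H-\energy)S^{k}$ is invertible precisely when $H-\energy$ is. Therefore $\sigma(B)=\sigma(H)$.

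\emph{Part (c).} The transfer matrices of $B=S^{-k}HS^{k}$ are $\trans_B(n,\energy)=\trans(n+k,\energy)$, so its monodromy matrix is $M_B(\energy)=\trans(k+K-1,\energy)\cdots\trans(k,\energy)$. Writing $P=\trans(K-1,\energy)\cdots\trans(k,\energy)$ and $Q=\trans(k-1,\energy)\cdots\trans(0,\energy)$, the $K$-periodicity of $v$ gives $M(\energy)=PQ$ and $M_B(\energy)=QP$. As each transfer matrix has determinant one and is thus invertible, $M_B(\energy)=Q\,M(\energy)\,Q^{-1}$, whence the cyclic invariance of the trace yields $\tr(M_B(\energy))=\tr(M(\energy))$.

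The main obstacle is the bookkeeping in part~(a): verifying that the entrywise limit exists exactly when the residues $h_n\bmod K$ stabilize, and then matching the resulting potential with the correct shift-conjugate $S^{-k}HS^{k}$ under the paper's sign convention for $S$. Once part~(a) is in hand, parts~(b) and~(c) are immediate consequences of, respectively, conjugation invariance of the spectrum and the cyclicity of the trace.
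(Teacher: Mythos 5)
Your proposal is correct and follows essentially the same route as the paper: entrywise convergence of the shifted diagonals forces the sequence of cyclic shifts of the potential to become eventually constant, the spectrum is preserved under conjugation by the shift isomorphism, and the trace identity follows from cyclic invariance of the trace (which you make explicit via the $PQ$ versus $QP$ factorisation, slightly more detailed than the paper's one-line appeal to cyclic permutations). The only nuance is that convergence forces the shifted potentials, rather than the residues $h_n \bmod K$ themselves, to stabilise --- these can differ when $v$ has extra symmetry --- but since any pointwise limit of functions from the finite set $\{v(\cdot+k)\}$ again lies in that set, the conclusion $B=S^{-k}HS^{k}$ is unaffected.
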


\begin{proof}
  \ref{it:limOpsPeriodic} Let $B \in \Lim(H)$ with corresponding sequence $(h_k)$.
  By Definition~\ref{def:limOps}, the sequence of representation matrices $(H_k)_{k \in \NN}$ with $H_k \coloneqq S^{-h_k} H S^{h_k}$ converges entrywise to the representation matrix of $B$.
  As the diagonal of $H$ only consists of the periodic continuation of $w \coloneqq (v(0),\dots,v(K - 1))$, the diagonal of the shifted matrix $H_k$ only consists of a periodic continuation of a cyclic permutation of $w$.
  Hence, in order to converge, the sequence $(H_k)$ has to become constant, eventually.
  This proves that $B = S^{-k} H S^k$ for some $k \in \{0,\dots,K  - 1\}$.

  The other inclusion is straightforward: choosing $h=(k+n\cdot K)_n$, we get
  \begin{equation*}
    S^{-k} H S^k = \lim_{n \to \infty} S^{-(k + n\cdot K)} HS^{k + n\cdot K} = A_h\in\Lim(H)\,.
  \end{equation*}

  \ref{it:limOpsPeriodicSpec} This follows from the fact that the shift operator is an isomorphism on $\ell^p(\ZZ)$.

  \ref{it:limOpsPeriodicTrace} From part~\ref{it:limOpsPeriodic} it follows that the monodromy matrix for $B$ is given by 
  \begin{equation*}
    M_B(E) = T(\tau(K-1), E) \cdots T(\tau(1),E) \,T(\tau(0),E) \, ,
  \end{equation*}
   where $(\tau(K-1),\dots,\tau(1),\tau(0))$ is just a cyclic permutation of $(0,1,\dots,K-1)$.
  Consequently $\tr(M_B(E)) = \tr(M(E))$.
\end{proof}

The complete description of the limit operators of a periodic Schrödinger operator from Lemma~\ref{lem:limOpsPeriodic} now allows for a powerful characterisation of invertibility.

\begin{lemma}\label{lemma:bijectiveIFFinjective_Marko1}
  Assume Hypothesis~\ref{hypo:PeriodicH}.
  The periodic Schrödinger operator $H$ is invertible on $\ell^p(\ZZ)$ for any $p\in [1,\infty]$ if and only if $H$ is injective on~$\ell^\infty(\ZZ)$.
\end{lemma}

\begin{proof}
  Let $H$ be injective on $\ell^\infty(\ZZ)$. 
  By Lemma~\ref{lem:limOpsPeriodic}\ref{it:limOpsPeriodic}, all limit operators of $H$
  are shifts of $H$ 
and hence, 
also injective on $\ell^\infty(\ZZ)$.
  In particular, by Lemma~\ref{lem:limOpsPeriodic}\ref{it:limOpsPeriodic} we have $H \in \Lim(H)$.
  Now, Corollary~\ref{cor:fundamental} implies that $H$ is invertible on $\ell^p(\ZZ)$ for all $p \in [1,\infty]$. 
  The only-if-part also follows from Corollary~\ref{cor:fundamental}.
\end{proof}

The following proposition resembles a well-known result about the spectrum of periodic Schrödinger operators, which can
be described by a trace condition of the monodromy matrix, see, e.g.,~\cite{Puelz.2014}.
We still give a proof here, because it employs limit operator techniques and is not very long.

\begin{proposition}\label{prop:trace_condition1}
  Assume Hypothesis~\ref{hypo:PeriodicH} and let $E \in \RR$.
  Then $E \in \sigma(H)$ if and only if the so-called \emph{trace condition},~$\abs{\tr(M(E))} \leq 2$, holds.
 \end{proposition}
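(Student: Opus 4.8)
The plan is to reduce the spectral question to an injectivity question on $\ell^\infty(\ZZ)$ and then to translate injectivity into the asymptotics of the transfer-matrix dynamics governed by $M(E)$. First I would observe that $H - E$ is again a $K$-periodic Schrödinger operator in the sense of Hypothesis~\ref{hypo:PeriodicH}, now with the real-valued potential $n \mapsto v(n) - E$. Hence Lemma~\ref{lemma:bijectiveIFFinjective_Marko1} applies to $H - E$ and shows that $H - E$ is invertible on $\ell^p(\ZZ)$ if and only if it is injective on $\ell^\infty(\ZZ)$. Consequently, $E \in \sigma(H)$ if and only if $H - E$ fails to be injective on $\ell^\infty(\ZZ)$, i.e. the three-term recurrence $x_{n+1} + x_{n-1} + (v(n) - E)x_n = 0$ admits a nonzero bounded solution $x \in \ell^\infty(\ZZ)$.

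Second, I would recast the boundedness of such a solution in terms of the monodromy matrix. Writing the recurrence via the transfer matrices $T(n,E)$ from~\eqref{eq:Transfer-Definition1}, the sampled vectors $u_m \coloneqq \binom{x_{mK}}{x_{mK-1}}$ satisfy $u_{m+1} = M(E)\,u_m$, so $u_m = M(E)^m u_0$ for all $m \in \ZZ$. Since within each period the intermediate entries of $x$ are obtained from $u_m$ by a fixed finite product of transfer matrices, each of determinant one and hence of bounded norm, the full sequence $x$ is bounded on $\ZZ$ if and only if the two-sided orbit $(M(E)^m u_0)_{m \in \ZZ}$ is bounded. Thus $E \in \sigma(H)$ if and only if $M(E)$ possesses a nonzero vector $u_0$ with bounded orbit.

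Third, I would analyse the $2 \times 2$ real matrix $M(E)$, which lies in the symplectic group and so satisfies $\det M(E) = 1$; its eigenvalues are the roots $\lambda, \lambda^{-1}$ of $\lambda^2 - \tr(M(E))\,\lambda + 1 = 0$. I would split into three cases according to the trace. If $\abs{\tr(M(E))} > 2$, the eigenvalues are real with $\abs{\lambda} \ne 1$, so every nonzero orbit blows up as $m \to +\infty$ or as $m \to -\infty$; no bounded orbit exists and $E \notin \sigma(H)$. If $\abs{\tr(M(E))} < 2$, the eigenvalues form a complex-conjugate pair on the unit circle, $M(E)$ is similar to a rotation, $(\norm{M(E)^m})_{m \in \ZZ}$ stays bounded, every orbit is bounded, and $E \in \sigma(H)$. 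If $\abs{\tr(M(E))} = 2$, the (double) eigenvalue is $\pm 1$; whether $M(E)$ equals $\pm I$ or is a nontrivial Jordan block, it always has a genuine eigenvector whose orbit is bounded (periodic up to sign), so again $E \in \sigma(H)$.

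Assembling the three cases yields exactly $E \in \sigma(H) \iff \abs{\tr(M(E))} \le 2$. The step I expect to require the most care is the boundary case $\abs{\tr(M(E))} = 2$, where one must treat the diagonalisable and Jordan-block sub-cases separately and notice that one bounded eigenvector already suffices to obstruct injectivity; a secondary routine point is justifying that boundedness of the sampled orbit $(u_m)$ controls boundedness of the whole sequence $x$ across each period.
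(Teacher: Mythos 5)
Your proposal is correct and follows essentially the same route as the paper: reduce $E \in \sigma(H)$ to non-injectivity of $H-E$ on $\ell^\infty(\ZZ)$ via Lemma~\ref{lemma:bijectiveIFFinjective_Marko1}, translate that into boundedness of a two-sided orbit of $M(E)$, and then read off the answer from the eigenvalue structure of a real $2\times 2$ matrix with determinant one. Your explicit three-way case split on the trace (including the Jordan-block subcase at $\abs{\tr(M(E))}=2$) is just a slightly more spelled-out version of the paper's observation that unboundedness of every nonzero orbit is equivalent to both eigenvalues having modulus different from one, which in turn is equivalent to the discriminant condition $\tr(M(E))^2>4$.
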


\begin{proof}
  A real number~$\energy$ lies in the resolvent set if and only if $H-\energy$ is invertible on~$\ell^p(\ZZ)$. 
  By Lemma~\ref{lemma:bijectiveIFFinjective_Marko1}, this is equivalent to $H-\energy$ being injective on $\ell^\infty(\ZZ)$. 
  However, using~\eqref{eq:Jac_recursion_formula2}, this is equivalent to the claim that for all $(\alpha, \beta) \neq (0,0)$, the two-sided sequence
        \begin{equation*}
 	\Big(	M(\energy)^n \begin{pmatrix}
 			\alpha \\
 			\beta
 		\end{pmatrix}
 	\Big)_{n \in \ZZ}
        \end{equation*}
 is unbounded. This is satisfied if and only if both eigenvalues $\lambda_1$, $\lambda_2$ of $M(\energy)$ fulfil $\abs{\lambda_i} \neq 1$. 
 Since $M(\energy)$ has determinant $\det(M(\energy)) = 1$,
 this is equivalent to
 the characteristic polynomial $p(\lambda) = \lambda^2 - \tr(M(\energy)) \lambda + 1$ 
 having two distinct real solutions. Note here that two identical solutions would be $\pm 1$ and two proper complex solutions would be $\lambda_1$, $\overline{\lambda_1}$ with product $1$, hence $|\lambda_i|=1$.
However, having two distinct real solutions is equivalent to the discriminant of $p$ being positive, which yields the condition $\tr(M(\energy))^2 > 4$.
\end{proof}

In the following examples, we illustrate the use of the trace condition from Proposition~\ref{prop:trace_condition1} for determining the spectrum of periodic Schrödinger operators.

\begin{example}\phantomsection\label{ex:twosidedPeriodic}  
\begin{enumerate}[label=(\alph*)]
	\item If the potential $v$ is constant with $v(0) \in \RR$, which means $\period = 1$,
	then $\tr(M(\energy))=\energy-v(0)$ and
	$\sigma(H) = [-2 + v(0), 2 +v(0)]$. 
	
	\item If the potential $v$ has period $2$, with $v(0) = -1$ and $v(1) = 1$, 
	we get
	$\tr(M(E))=\energy^2-3$ and
	$\sigma(H) = [-\sqrt{5} , -1] \cup [1, \sqrt{5}]$. 
        \item\label{ex:twosidedPeriodic-2}
          More generally, for a $2$-periodic potential, 
          we have 
          \begin{equation*}
            \tr(M(\energy)) = -2 + (\energy - v(0))(\energy - v(1))\,.
          \end{equation*}
          This shows that we always will have two spectral bands, namely
          \begin{align*}
            \sigma(H) =  \Big[\frac{1}{2}(v(0) + v&(1) - \delta),\,   \min\{v(0), v(1)\} \Big] \\
            \cup &
            \Big[ \max\{v(0),v(1)\}  ,\,  \frac{1}{2}(v(0) + v(1) + \delta)      \Big]\, ,
          \end{align*}
          where $\delta = \sqrt{16 + v(0)^2 - 2 v(0) v(1) + v(1)^2}$ denotes the discriminant of the polynomial $\tr(M(\energy))$.
	\item\label{ex:twosidedPeriodicC} For a potential $v$ with period $3$ given by $v(0) = 0$, $v(1) = 1$
	and $v(2) = 0$, we get
	$\tr(M(\energy)) = {{\energy}^{3}}-{{\energy}^{2}}-3 \energy+1 $. 
        Therefore, the spectrum is given by 
	$\sigma(H) = [-\sqrt{3} , -1] \cup [1-\sqrt{2}, 1] \cup [1+\sqrt{2}, \sqrt{3}]$. 
\end{enumerate}
\end{example}

\begin{corollary}\label{cor:bandStructure}
    Assume Hypothesis~\ref{hypo:PeriodicH}.
The spectrum of~$H$ consists of at most~$K$ bands. 
More precisely, there are
numbers~$a_1 \leq b_1 \leq a_2 \leq \cdots \leq a_K \leq b_K$
with
\begin{equation*}
  \sigma(H) = \bigcup_{i =1}^K [a_i, b_i] \,.
\end{equation*}
\end{corollary}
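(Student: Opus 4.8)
The plan is to reduce everything to the single real polynomial $p(\energy) \coloneqq \tr(M(\energy))$ and to count the connected components of the set where $|p| \le 2$. First, since $v$ is real-valued, $H$ is self-adjoint on $\ell^2(\ZZ)$, so $\sigma(H) \subset \RR$; by Proposition~\ref{prop:kurbatov} the spectrum does not depend on $p \in [1,\infty]$, so this remains true on every $\ell^p(\ZZ)$. Proposition~\ref{prop:trace_condition1} then identifies the spectrum as
\[
\sigma(H) = \{\energy \in \RR : |p(\energy)| \le 2\} = p^{-1}([-2,2]).
\]
Thus it suffices to understand the real polynomial $p$ and its sublevel set.

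Second, I would show that $p$ is a polynomial of degree exactly $\period$. Writing $P_k \coloneqq \trans(k-1,\energy)\cdots\trans(0,\energy)$, an induction on $k$ using the recursion $P_{k+1} = \trans(k,\energy)P_k$ and the explicit shape of the transfer matrix in~\eqref{eq:Transfer-Definition1} shows that the $(1,1)$-entry of $P_k$ is a monic polynomial in $\energy$ of degree $k$, while the $(2,2)$-entry has degree $k-2$. Since $M(\energy) = P_\period$ by~\eqref{eq:TransferMDefinition}, the trace $p(\energy) = (M(\energy))_{11} + (M(\energy))_{22}$ is monic of degree $\period$, and in particular $\deg p' = \period - 1$. (For the mere bound on the number of bands, the weaker estimate $\deg p \le \period$, immediate from multiplying $\period$ entries of degree at most one, would already be enough.)

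Finally, I would run the counting argument. The derivative $p'$ has at most $\period-1$ real zeros, say $c_1 < \dots < c_r$ with $r \le \period-1$; these split $\RR$ into the $r+1 \le \period$ intervals $(-\infty,c_1],[c_1,c_2],\dots,[c_r,\infty)$, on each of which $p$ is monotonic. On a monotonic interval the preimage $p^{-1}([-2,2])$ is itself an interval, so $\sigma(H) = p^{-1}([-2,2])$ is a union of at most $\period$ intervals and hence has at most $\period$ connected components. As $H$ is a bounded operator, $\sigma(H)$ is compact, so each component is a compact interval $[a_i,b_i]$; ordering them yields $a_1 \le b_1 < a_2 \le \dots$, and if there are fewer than $\period$ components one pads the chain by repeating the final endpoint, giving the asserted $a_1 \le b_1 \le \dots \le a_\period \le b_\period$ with $\sigma(H) = \bigcup_{i=1}^\period [a_i,b_i]$.

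The main obstacle is the degree computation in the second step: one must track the leading behaviour of all four entries of the matrix product simultaneously to guarantee that the trace has degree exactly $\period$ (equivalently, that the lower-order terms do not conspire to drop the degree), since this degree is precisely what caps the number of spectral bands at $\period$. Once the polynomial picture is in place, the monotonicity argument together with the compactness and ordering bookkeeping are routine.
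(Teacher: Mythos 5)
Your proposal is correct and follows essentially the same route as the paper: reduce to $\sigma(H)=p^{-1}([-2,2])$ via the trace condition of Proposition~\ref{prop:trace_condition1} and then use that $p=\tr(M(\cdot))$ is a real polynomial of degree $K$ to bound the number of components. You merely spell out two points the paper leaves implicit — the induction showing $\tr(M(E))$ is monic of degree exactly $K$, and the counting of components via the monotonicity intervals of $p$ rather than via the roots of $p\pm 2$ — both of which are sound.
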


\begin{proof}
 The trace of $M(E)$ gives us a polynomial $p_K$ in the variable $E$ with degree $K$.
 A real number $E$ lies in $\sigma(H)$
 if and only if $-2 \leq p_K(E) \leq 2$, by Proposition~\ref{prop:trace_condition1}. 
 The real zeros of the polynomials $p_K \pm 2$ give us the numbers~$\{a_i, b_i\}_{i =1, \ldots, K}$.
\end{proof}

\subsection{One-Sided Periodic Schrödinger Operators}\label{sec:oneSidedPeriodic}
In this section, we will cut the Schrödinger operator in
two different ways to get one-sided operators.

If $H$ is a periodic Schrödinger operator on~$\ell^p(\ZZ)$,
we let $H_+$ denote its one-sided restriction to the space~$\ell^p(\ZZ_+)$ subject to the Dirichlet boundary condition~$x_{-1} = 0$.
Analogously, let~$H_-$ denote its one-sided restriction to the space~$\ell^p(\ZZ_-)$ subject to the Dirichlet boundary condition~$x_{1} = 0$.
The matrix representation of~$H_\pm$ coincides with the respective one-sided compressions from Definition~\ref{def:limOps}.

The following lemma allows us to restrict ourselves to 
one of the two compressions, $H_+$ or $H_-$.

\begin{lemma}\label{lem:specH-}
  Assume Hypothesis~\ref{hypo:PeriodicH}. Let
  $H^{\mathrm{R}}$ denote the Schrödinger operator with the \emph{reversed potential} $v^{\mathrm{R}}(n) \coloneqq v(-n)$, $n \in \ZZ$.
  Then $\sigma(H) = \sigma(H^{\mathrm{R}})$, $\sigma(H_-) = \sigma(H^{\mathrm{R}}_+)$, and $\tr(M_H) = \tr(M_{H^{\mathrm{R}}})$.
\end{lemma}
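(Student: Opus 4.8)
The plan is to realise the two spectral identities through a single \emph{reflection operator} and to settle the trace identity by a direct transfer-matrix computation. Define $J\colon\ell^p(\ZZ)\to\ell^p(\ZZ)$ by $(Jx)_n=x_{-n}$; it is an isometric isomorphism that is its own inverse. A one-line computation from \eqref{eq:recursionPeriodicH} gives $(JHJx)_n=x_{n+1}+x_{n-1}+v(-n)x_n=(H^{\mathrm{R}}x)_n$, so that $JHJ^{-1}=H^{\mathrm{R}}$. Similar operators share their spectrum, hence $\sigma(H)=\sigma(H^{\mathrm{R}})$.

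For the second identity I would note that the very same $J$ restricts to an isometric isomorphism $\ell^p(\ZZ_-)\to\ell^p(\ZZ_+)$, since $(Jx)_n=x_{-n}$ turns a sequence supported on $\ZZ_-$ into one supported on $\ZZ_+$. The Dirichlet condition $x_1=0$ defining $H_-$ is transported by $J$ precisely to the condition $y_{-1}=0$ defining $H^{\mathrm{R}}_+$, and repeating the computation above on the compressions yields $JH_-=H^{\mathrm{R}}_+J$. Thus $H_-$ and $H^{\mathrm{R}}_+$ are similar and $\sigma(H_-)=\sigma(H^{\mathrm{R}}_+)$. The only point needing care is the boundary index $n=0$, where one checks directly that both sides reduce to $x_{-1}+v(0)x_0$.

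For the trace identity I would in fact prove $\tr(M_{H^{\mathrm{R}}}(\energy))=\tr(M(\energy))$ for every $\energy$, the stated equality being the case $\energy=0$. Using the $\period$-periodicity $v(-n)=v(\period-n)$, the transfer matrices $\trans^{\mathrm{R}}(j,\energy)$ of $H^{\mathrm{R}}$ satisfy $\trans^{\mathrm{R}}(0,\energy)=\trans(0,\energy)$ and $\trans^{\mathrm{R}}(j,\energy)=\trans(\period-j,\energy)$ for $1\le j\le\period-1$, so that by \eqref{eq:TransferMDefinition} one gets $M_{H^{\mathrm{R}}}(\energy)=\trans(1,\energy)\trans(2,\energy)\cdots\trans(\period-1,\energy)\trans(0,\energy)$. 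By cyclic invariance of the trace this equals $\tr(\trans(0,\energy)\trans(1,\energy)\cdots\trans(\period-1,\energy))$, the trace of the transfer-matrix product taken in ascending order, opposite to the descending order appearing in $M(\energy)$.

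To compare the two orders I would invoke $\tr(X)=\tr(X^T)$ together with the elementary identity $\trans(n,\energy)^T=D\,\trans(n,\energy)\,D$, where $D=\left(\begin{smallmatrix}1&0\\0&-1\end{smallmatrix}\right)$ satisfies $D^2=\left(\begin{smallmatrix}1&0\\0&1\end{smallmatrix}\right)$; this is verified by a single multiplication using \eqref{eq:Transfer-Definition1}. Transposing the ascending-order product reverses its factors, and inserting the identity $D^2$ between consecutive factors collapses the result to $D\,M(\energy)\,D$; a final use of cyclic invariance removes the conjugation and gives $\tr(M_{H^{\mathrm{R}}}(\energy))=\tr(M(\energy))$. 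I expect this last manoeuvre --- untangling the reversed-order product via the transpose-and-conjugate trick --- to be the only genuinely non-obvious step, the two spectral identities being immediate once $J$ is available.
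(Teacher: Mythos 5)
Your proposal is correct and follows the paper's own route: both spectral identities are obtained by conjugation with the flip operator (your $J$ is the paper's $\Phi$, and its restriction between the half-line spaces is the paper's $\Phi_-$), including the check that the Dirichlet conditions $x_1=0$ and $y_{-1}=0$ correspond under the flip. The only divergence is in the trace identity, where the paper uses $T^{-1}=FTF$ with the antidiagonal flip $F=\left(\begin{smallmatrix}0&1\\1&0\end{smallmatrix}\right)$ together with $\tr(M^{-1})=\tr(M)$ for real symplectic matrices, while you use $T^{T}=DTD$ with $D=\mathrm{diag}(1,-1)$ together with $\tr(X^{T})=\tr(X)$; both conjugation identities are verified by one multiplication, and your variant has the minor advantage of not invoking the symplectic trace fact while also yielding the identity for every energy $E$.
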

\begin{proof}
  Consider the flip operators
  \begin{equation*}
    \Phi \colon \ell^p(\ZZ) \to \ell^p(\ZZ), \quad (x_n)_{n \in \ZZ} \mapsto (x_{-n})_{n \in \ZZ} 
  \end{equation*}
  and
  \begin{equation*}
    \Phi_- \colon \ell^p(\ZZ_+) \to \ell^p(\ZZ_-), \quad (x_n)_{n \in \ZZ_+} \mapsto (x_{-n})_{n \in \ZZ_-} \, .
  \end{equation*}
  Clearly, $\Phi$ and $\Phi_-$ are isomorphisms. The claims for the spectra follow from the identities
  $H = \Phi H^{\mathrm{R}} \Phi^{-1}$ 
  and 
  \begin{equation}\label{eq:H_flipHplus}
   H_- = \Phi_- H^{\mathrm{R}}_+ \Phi_-^{-1} \,,
  \end{equation}
	which is a straightforward calculation.
	
	To prove the trace identity, we use that a transfer matrix $T = T(n,E)$ always has the property:
	\begin{equation*}
	T^{-1} = F T F \quad \text{with} \quad F = \begin{pmatrix}
	0 & 1 \\ 1 & 0
\end{pmatrix}	\,. 
	\end{equation*}
	Hence, we also have $F M_H F = T(K-1)^{-1} \cdots T(0)^{-1} = M_{H^\mathrm{R}}^{-1}$. Taking the trace yields:
	\begin{equation*}
	 \tr(M_H) = \tr( F M_H F ) = \tr \big(M_{H^{\mathrm{R}}}^{-1} \big) = \tr \big(M_{H^{\mathrm{R}}} \big) \,,
	\end{equation*}
	where the last equality holds for every real symplectic matrix.
\end{proof}

Since $H\in\Lim(H_+)$, the spectrum of $H_+$ contains that of $H$ as a consequence of Proposition~\ref{prop:essSpecEqualsSpecnew}\ref{it:spectra_of_limops}. But how much bigger is it? The answer clearly has something to do with the entries of $H_+$ near the cut-off. To this end, put
\begin{equation*}
 H_{a..b} \coloneqq  
		\begin{pmatrix}
			v(a)       & 1  &        &        &           \\
			1     & v(a+1)    &  \ddots  &        &               \\
			&         \ddots & \ddots & \ddots  &        \\
			&                & \ddots & v(b-1) & 1  \\
			&                &        & 1  & v(b)
		\end{pmatrix}
\end{equation*}
for $a, b\in\ZZ_+$ with $a\le b$.
Then indeed, the following proposition allows us to step from the two-sided to the one-sided periodic operator in terms of spectra. It follows directly from~\cite[Theorem~4.42(i)]{Hagger.2016}, where it is even stated for non-self-adjoint operators with three periodic diagonals. 
\begin{proposition}\label{prop:oneSidedSpecRep}
  Assume Hypothesis~\ref{hypo:PeriodicH} 
  with period $\period \geq 2$. 
    Then 
    \begin{equation}\label{eq:oneSidedSpecRep}
        \sigma(H_+) = \sigma(H) \cup \big\{ E \in \mathbb{R} :
        M(E)_{2,1} = 0 ~\text{ and }~
         | M(E)_{1,1} | < 1 
         \big\} \,.
    \end{equation}
    For the entries of $M(E)$ we have the formula
    \begin{equation}\label{eq:monodromyDeterminant}
    M(E)
    = (-1)^{K + 1}\begin{pmatrix}
            -\det(H_{0..K-1}-E) & -\det(H_{1..K-1}-E) \\[1em]
      \hphantom{-}\det(H_{0..K-2}-E) & \hphantom{-}\det(H_{1..K-2}-E )
    \end{pmatrix} \,.
   \end{equation}
\end{proposition}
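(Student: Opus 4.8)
The plan is to identify the essential spectrum of $H_+$ with $\sigma(H)$ and then to pin down the remaining points of $\sigma(H_+)$ as Dirichlet eigenvalues governed by the transfer-matrix dynamics. First I would invoke Proposition~\ref{prop:kurbatov} to reduce to $p = 2$, where $H_+$ is self-adjoint because $v$ is real-valued; in particular $\sigma(H_+)$ is the same set for every $p$. Since $\Lim(H_+) = \Lim_+(H) = \{S^{-k} H S^k : k = 0,\dots,K-1\}$ by Lemma~\ref{lem:limOpsPeriodic}\ref{it:limOpsPeriodic}, and each such shift has spectrum $\sigma(H)$ by Lemma~\ref{lem:limOpsPeriodic}\ref{it:limOpsPeriodicSpec}, Proposition~\ref{prop:essSpecEqualsSpecnew} yields $\sigma_\ess(H_+) = \sigma(H)$. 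By self-adjointness, the set $\sigma(H_+)\setminus\sigma(H)$ therefore consists only of isolated eigenvalues of $H_+$, so it remains to decide, for real $E \notin \sigma(H)$, whether $E$ is an eigenvalue.

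For such $E$ we have $|\tr(M(E))| > 2$ by Proposition~\ref{prop:trace_condition1}, hence $M(E)$ is hyperbolic with real eigenvalues $\mu, \mu^{-1}$ satisfying $|\mu| < 1 < |\mu^{-1}|$. An eigenvector $x$ of $H_+$ for $E$ is exactly a solution of the eigenvalue recursion~\eqref{eq:Jac_recursion_formula2} on $\ZZ_+$ subject to the Dirichlet condition $x_{-1} = 0$, so its initial vector is $\binom{x_0}{0}$ and $\binom{x_{nK}}{x_{nK-1}} = M(E)^n \binom{x_0}{0}$. Since the transfer matrices within one period are invertible and uniformly bounded, the full sequence lies in $\ell^2(\ZZ_+)$ if and only if this initial vector lies in the contracting eigenspace of $M(E)$. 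Taking $x_0 = 1$, the vector $\binom{1}{0}$ is an eigenvector of $M(E)$ if and only if $M(E)_{2,1} = 0$, in which case its eigenvalue is $M(E)_{1,1}$, and this eigenvalue is the contracting one precisely when $|M(E)_{1,1}| < 1$. This yields the claimed description of $\sigma(H_+)\setminus\sigma(H)$. As a consistency check, $M(E)_{2,1} = 0$ forces $M(E)$ to be triangular with eigenvalues $M(E)_{1,1}$ and $1/M(E)_{1,1}$, so $|M(E)_{1,1}| < 1$ already gives $|\tr(M(E))| > 2$ and hence $E \notin \sigma(H)$ automatically.

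For the entry formula~\eqref{eq:monodromyDeterminant}, I would read off the two columns of $M(E)$ from the two canonical solutions of~\eqref{eq:Jac_recursion_formula2}. The solution with $x_{-1} = 0$, $x_0 = 1$ satisfies $x_n = (-1)^n \det(H_{0..n-1} - E)$, proved by induction since both sides obey the scalar three-term recurrence $\xi_{n+1} = (E - v(n))\xi_n - \xi_{n-1}$ coming from cofactor expansion of the tridiagonal determinant along its last row, with the empty determinant set to $1$. Then $\binom{x_K}{x_{K-1}} = M(E)\binom{1}{0}$ gives the first column. The second column comes from the solution with $y_{-1} = 1$, $y_0 = 0$; shifting the index gives $y_n = -\hat x_{n-1}$, where $\hat x$ is the analogous solution for the potential $v(\cdot + 1)$, so $y_n = (-1)^n \det(H_{1..n-1} - E)$ and $\binom{y_K}{y_{K-1}} = M(E)\binom{0}{1}$ gives the second column. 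Collecting the four entries and factoring out $(-1)^{K+1}$ produces~\eqref{eq:monodromyDeterminant}; here the hypothesis $K \geq 2$ keeps the index ranges $0..K-2$ and $1..K-2$ meaningful, so the empty-determinant convention is not invoked in a degenerate way (indeed the formula already fails for $K = 1$).

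The main obstacle is the equivalence in the eigenvalue step: one must argue carefully that $\ell^2$-membership of the \emph{entire} eigenfunction, not merely of the sampled subsequence $\binom{x_{nK}}{x_{nK-1}}$, is equivalent to the initial vector lying in the one-dimensional stable subspace of the hyperbolic matrix $M(E)$, and that the Dirichlet condition pins the initial vector to the line $\CC\binom{1}{0}$, on which being in the stable subspace translates to the two scalar conditions $M(E)_{2,1} = 0$ and $|M(E)_{1,1}| < 1$. Everything else is the essential-spectrum bookkeeping and a determinant recurrence.
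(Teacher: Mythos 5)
Your proof is correct, but it takes a genuinely different route from the paper's. The paper's proof is essentially a reduction to an external result: after identifying $\sigma_\ess(H_+)=\sigma(H)$ via limit operators (exactly as you do) and observing that self-adjointness forces $\sigma_\ess(H_+)\subset\RR$, so that $\CC\setminus\sigma_\ess(H_+)$ is connected and unbounded, it quotes \cite[Theorem~4.42(i)]{Hagger.2016} for the identity~\eqref{eq:oneSidedSpecRep} and \cite[Section~4.2.3]{Hagger.2016} for the determinant formula~\eqref{eq:monodromyDeterminant}. You instead prove both statements from scratch: you use self-adjointness on $\ell^2(\ZZ_+)$ (after the reduction via Proposition~\ref{prop:kurbatov}) to see that $\sigma(H_+)\setminus\sigma_\ess(H_+)$ consists of isolated eigenvalues, and then run the hyperbolic transfer-matrix dichotomy to decide exactly when the Dirichlet solution is square-summable. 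This is precisely the argument the paper itself deploys later, at $E=0$ and in the $\ell^\infty$ setting, in the proof of Proposition~\ref{rem:Monodromy_Hplus_not_inject}, so your route makes the present proposition self-contained at the cost of anticipating that computation at general real $E\notin\sigma(H)$. Your derivation of~\eqref{eq:monodromyDeterminant} from the two canonical solutions of~\eqref{eq:Jac_recursion_formula2} and the three-term determinant recurrence checks out as well; note only that for $K=2$ the entry $\det(H_{1..K-2}-E)$ \emph{is} the empty determinant and equals $1$, which is exactly what your induction delivers, so nothing degenerate occurs. What the citation buys the paper is brevity and access to a statement valid for non-self-adjoint tridiagonal operators with three periodic diagonals (hence the need to verify connectedness of $\CC\setminus\sigma_\ess(H_+)$ rather than invoke self-adjointness of $H_+$ directly); what your argument buys is independence from the reference and a transparent link to the stable/unstable splitting of $M(E)$ that the rest of the paper exploits.
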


\begin{proof}
  Note that $\sigma(H) = \sigma_\ess(H_+)$ by Lemma~\ref{lem:limOpsPeriodic}\ref{it:limOpsPeriodicSpec}, Proposition~\ref{prop:essSpecEqualsSpecnew}\ref{it:equal_spectra_ess}, and Lemma~\ref{lem:limOpsPeriodic}\ref{it:limOpsPeriodic}.
    Since $H$ is self-adjoint on $\ell^2(\ZZ)$, we know that $\sigma(H)$,
    which is the same for $p\ne 2$,
    lies on the real axis.
    This implies that~$\CC \setminus \sigma_\ess(H_+)$ is connected and thus unbounded.
    With this, identity~\eqref{eq:oneSidedSpecRep} is a direct consequence of~\cite[Theorem~4.42(i)]{Hagger.2016} as the tridiagonal matrix representation of $H_+$ has constant upper and lower diagonals. 
    The formula for $M(E)$ is from~\cite[Section~4.2.3]{Hagger.2016}.
\end{proof}

Note that we have used Proposition~\ref{prop:essSpecEqualsSpecnew}, which is a general result about the essential spectrum
of band operators using its limit operators. 
However, in the case of periodic Jacobi operators,
also other proofs for the essential spectrum $\sigma_\ess(H_+)$
are known, see, e.g.,~\cite[Theorem~7.2.1]{Simon.2011}.

Now, we present an interlacing property that holds in general
for the eigenvalues of Jacobi matrices. 
Here, these matrices come from a $\period$-periodic Schrödinger operator $H$.
Then we can consider the following matrices:
\begin{equation*}
H_{0..\period-2}\qquad\text{and}\qquad
H_{0..\period-1}^\varphi \coloneqq H_{0..\period-1} + 
\begin{pmatrix}  & & & & \mathrm{e}^{-\ii \varphi}\\   \\   \mathrm{e}^{\ii \varphi} \end{pmatrix}
\end{equation*}
for any $\varphi \in [0,2 \pi]$.
Both correspond to finite-dimensional restrictions of the operator equation with Dirichlet and periodic boundary conditions, respectively.
The eigenvalues of 
$H_{0..\period-1}^\varphi$ for all values of $\varphi$
give us the spectrum of the (two-sided) periodic Schrödinger operator in the following sense
\begin{equation*}
	\sigma(H) = \bigcup_{\varphi \in [0,2\pi]} 
	   \sigma(H_{0..\period-1}^\varphi) =
	\bigcup_{j =1}^{\period} [\energy_{2j-1}, \energy_{2j}]\,,
\end{equation*}
where $\energy_{1},\dots,\energy_{2\period}$ is the increasing enumeration of the eigenvalues of 
$H_{0..\period-1}^0$ and $H_{0..\period-1}^\pi$
together.
Again, one gets the band structure of the periodic Schrödinger operator as shown in Corollary~\ref{cor:bandStructure}.
For more details about this approach, the so-called \emph{Floquet--Bloch theory} for periodic band operators, 
see, e.g.,~\cite[Thm 4.4.9]{Davies2007:Book},~\cite{Puelz.2014},~and~\cite[Chapter~7]{Teschl.2000}. In addition, one finds
the following classical interlacing result, cf.~\cite[Theorem~4.5]{Teschl.2000}.

\begin{proposition}\label{prop:SturmOnedim-interlacing1}
 Assume Hypothesis~\ref{hypo:PeriodicH}.
 The eigenvalues of 
 $H_{0..\period-2}$ and $H_{0..\period-1}^\varphi$
 are interlacing for every
 $\varphi \in [0,2 \pi]$, that is
		\begin{equation*}
		\widetilde{\energy}_{1,\varphi} \leq \energy_1 \leq \widetilde{\energy}_{2,\varphi}  \leq \energy_2 \leq \cdots
		 \leq \energy_{\period-1} \leq \widetilde{\energy}_{\period,\varphi}
		\end{equation*}
where $\energy_1 \le \dots \le \energy_{\period-1}$ and $\widetilde{\energy}_{1,\varphi} \le \dots \le \widetilde{\energy}_{\period,\varphi}$ denote the eigenvalues of  
$H_{0..\period-2}$ and $H_{0..\period-1}^\varphi$, respectively.
\end{proposition}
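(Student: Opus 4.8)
The plan is to recognise the statement as a special case of the \emph{Cauchy interlacing theorem} (the Poincaré separation theorem in codimension one), once the correct submatrix relationship between the two matrices is identified.

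First I would note that both matrices are Hermitian. Since the potential $v$ is real-valued, $H_{0..\period-1}$ and $H_{0..\period-2}$ are real symmetric. The perturbation forming $H_{0..\period-1}^\varphi$ places $\ee^{-\ii\varphi}$ in the top-right corner (row $0$, column $\period-1$) and $\ee^{\ii\varphi}$ in the bottom-left corner (row $\period-1$, column $0$); as these entries are complex conjugates, $H_{0..\period-1}^\varphi$ is Hermitian, so all eigenvalues involved are real.

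The crucial observation is that $H_{0..\period-2}$ is exactly the principal submatrix of $H_{0..\period-1}^\varphi$ obtained by deleting row and column $\period-1$. Both corner entries of the perturbation lie entirely in row $\period-1$ or column $\period-1$ and are therefore removed by this deletion; what remains is precisely the top-left $(\period-1)\times(\period-1)$ block, which equals $H_{0..\period-2}$. It is essential here that we delete the last index from the perturbed matrix $H_{0..\period-1}^\varphi$, not merely from the unperturbed $H_{0..\period-1}$; verifying that this deletion annihilates both corner entries is the only point requiring attention.

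With this identification, the Cauchy interlacing theorem yields the claim at once: for a Hermitian $\period\times\period$ matrix with eigenvalues $\widetilde{\energy}_{1,\varphi}\le\cdots\le\widetilde{\energy}_{\period,\varphi}$ and a principal submatrix of size $\period-1$ with eigenvalues $\energy_1\le\cdots\le\energy_{\period-1}$, one has $\widetilde{\energy}_{j,\varphi}\le \energy_j\le\widetilde{\energy}_{j+1,\varphi}$ for every $j$. For a self-contained argument these inequalities follow from the Courant--Fischer min-max principle by intersecting the competing test subspaces with the coordinate hyperplane $\{x_{\period-1}=0\}$, on which the Rayleigh quotient of $H_{0..\period-1}^\varphi$ coincides with that of $H_{0..\period-2}$. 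Because the entire proof reduces to the min-max principle for a Hermitian pair, there is no genuine obstacle; the only subtlety is the bookkeeping of which row and column to remove so that the corner entries vanish and the two eigenvalue families line up exactly as stated.
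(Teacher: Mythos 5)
Your proof is correct and follows essentially the same route as the paper, which simply invokes the min-max principle (Cauchy interlacing) in the spirit of Simon's argument. Your explicit verification that deleting the last row and column of $H_{0..K-1}^\varphi$ removes both corner entries $\ee^{\pm\ii\varphi}$ and leaves exactly $H_{0..K-2}$ is the only nontrivial bookkeeping step, and you carry it out correctly.
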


\begin{proof}
The proof can be done analogously to~\cite[Proposition 2.2]{Simon.2005sturm}
by using the \emph{min-max principle}, see, \eg,~\cite[Theorem~4.2.11]{Horn.1985}~and~\cite[Theorem~12.1]{Schmudgen.2012}.
 \end{proof}
 Applying this result, we can supplement Proposition~\ref{prop:oneSidedSpecRep} with more information. There we 
 have seen that the spectrum of the restricted operator $H_+$ can only be larger
 than the spectrum of $H$ and the difference can only be given by eigenvalues. 
 These eigenvalues of $H_+$ coincide with those of $H_{0..\period-2}$. 
 However,
 now we conclude with Proposition~\ref{prop:SturmOnedim-interlacing1}
 that these eigenvalues necessarily lie inside the gaps of $\sigma(H)$
 and that each gap can only add at most one eigenvalue to $\sigma(H_+)$.
 Since they stem from a cut-off, i.e., imposing a Dirichlet condition, they are often referred to as \emph{Dirichlet eigenvalues}.
 
\begin{example}\label{ex:gapsPeriodic}
\begin{enumerate}[label=(\alph*)]
  \item\label{it:gapsPeriodic-2} Consider the $2$-periodic Schrödinger operator $H$ from Example~\ref{ex:twosidedPeriodic}\ref{ex:twosidedPeriodic-2}. 
    For the spectrum of $H_+$, we calculate, according to Proposition~\ref{prop:oneSidedSpecRep},
        \begin{equation*}
      \sigma(H_{0..2-2}) = \sigma\begin{pmatrix}v(0)\end{pmatrix} = \{v(0)\}
    \end{equation*}
    and
    \begin{equation*}
      \det(H_{0..1} - v(0))
      = 
      \det\Big(
    \begin{pmatrix}
    v(0) - v(0) & 1 \\ 
    1 & v(1) - v(0)
    \end{pmatrix}\Big) 
    = -1,
    \end{equation*}
    which implies $\sigma(H_+) = \sigma(H)$.
\item For the 3-periodic Schrödinger operator from Example~\ref{ex:twosidedPeriodic}\ref{ex:twosidedPeriodicC}, we find:
    \begin{equation*}
  \sigma(H_+) = \sigma(H) \cup \Big\{ -\frac{\sqrt{5}-1}{2} \Big\}\,.
    \end{equation*}
\end{enumerate}
\end{example}

As we have seen, Proposition~\ref{prop:trace_condition1} with $E = 0$ gives us a sufficient condition for a periodic Schrödinger operator to be invertible. 
In order to ensure applicability of the FSM, according to Proposition~\ref{prop:FSMCondition}, we also need to ensure injectivity of one-sided compressions of periodic Schrödinger operators.
Note that, for a one-sided Schrödinger operator, the transfer matrices~$T(n)$ still follow the same pattern, as the Dirichlet boundary condition is encoded in the vector~$(x_n)$ during the start of the recursion from formula~\eqref{eq:Jac_recursion_formula2}.
Also recall that $M \coloneqq M(0)$ abbreviates the monodromy matrix for energy $E=0$.

\begin{proposition}\label{rem:Monodromy_Hplus_not_inject}
Assume Hypothesis~\ref{hypo:PeriodicH}.
If $H$ is invertible, that is, $|\tr(M)| > 2$, the following are equivalent:
 \begin{enumerate}[label=(\roman*)]
 \item\label{it:compressionInj} the compression $H_+$ is not injective on $\ell^\infty(\ZZ_+)$,
 \item\label{it:compressionInv} the compression $H_+$ is not invertible on any $\ell^p(\ZZ_+)$,
 \item\label{it:eigenvector} ${1\choose 0}$ is an eigenvector of $M$ w.r.t.\ the smaller (in modulus) of the two eigenvalues (the one with $|\lambda_i|<1$),
 \item\label{it:matrixEntries} $M_{2,1} = 0$ and $\abs{M_{1,1}} < 1$.
 \end{enumerate}
\end{proposition}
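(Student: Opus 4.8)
The plan is to run the equivalences as \ref{it:compressionInj} $\Leftrightarrow$ \ref{it:eigenvector} $\Leftrightarrow$ \ref{it:matrixEntries} via the transfer-matrix dynamics, and to close the loop \ref{it:compressionInj} $\Leftrightarrow$ \ref{it:compressionInv} using self-adjointness.

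First I would identify the $\ell^\infty$-kernel of $H_+$ with bounded orbits of the monodromy matrix. A sequence $x$ with $x_{-1} = 0$ lies in $\ker H_+$ precisely when the recursion~\eqref{eq:Jac_recursion_formula2} holds at $E = 0$ with initial data $\binom{x_0}{x_{-1}} = x_0 \binom{1}{0}$; iterating over full periods gives $\binom{x_{nK}}{x_{nK-1}} = x_0\, M^n \binom{1}{0}$. Since only the finitely many partial products $T(j-1)\cdots T(0)$, $0 \le j \le K-1$, interpolate between consecutive period markers, $x \in \ell^\infty(\ZZ_+)$ holds if and only if the sampled orbit $(M^n \binom{1}{0})_{n}$ is bounded. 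Hence $H_+$ is non-injective on $\ell^\infty(\ZZ_+)$ exactly when $\binom{1}{0}$ generates a bounded $M$-orbit.

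Next I would use that $|\tr(M)| > 2$ together with $\det M = 1$ forces two distinct real eigenvalues $\lambda_1, \lambda_2$ with $\lambda_1 \lambda_2 = 1$ and $|\lambda_1| < 1 < |\lambda_2|$. Writing $\binom{1}{0}$ in the eigenbasis, the orbit stays bounded if and only if its $\lambda_2$-component vanishes, i.e.\ if and only if $\binom{1}{0}$ is an eigenvector for the smaller eigenvalue $\lambda_1$; this is \ref{it:compressionInj} $\Leftrightarrow$ \ref{it:eigenvector}. The step \ref{it:eigenvector} $\Leftrightarrow$ \ref{it:matrixEntries} is then elementary: $M \binom{1}{0} = \binom{M_{1,1}}{M_{2,1}}$ is a multiple of $\binom{1}{0}$ iff $M_{2,1} = 0$, in which case the corresponding eigenvalue is $M_{1,1}$, and by $\det M = 1$ it is the smaller one iff $|M_{1,1}| < 1$.

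For \ref{it:compressionInj} $\Leftrightarrow$ \ref{it:compressionInv} I would invoke the self-adjoint machinery. Since $H$ is self-adjoint and invertible, Corollary~\ref{cor:injImpliesInvertibleForLimOps}\ref{it:B+inj} turns injectivity of $H_+$ on $\ell^\infty(\ZZ_+)$ into invertibility of $H_+$ on every $\ell^p(\ZZ_+)$; conversely, invertibility on a single $\ell^p(\ZZ_+)$ propagates to all $\ell^q(\ZZ_+)$ --- in particular $q = \infty$ and hence injectivity there --- by Proposition~\ref{prop:kurbatov}. Reading these two implications contrapositively yields \ref{it:compressionInj} $\Leftrightarrow$ \ref{it:compressionInv}. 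The main obstacle I anticipate is the bookkeeping in the first paragraph: justifying cleanly that boundedness of the full sequence reduces to boundedness of the period-sampled orbit, and that the one-parameter family of kernel candidates indexed by $x_0$ contains a nonzero $\ell^\infty$ element exactly under the stated eigenvector condition.
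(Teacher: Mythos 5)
Your proposal is correct and follows essentially the same route as the paper: reduce non-injectivity of $H_+$ on $\ell^\infty(\ZZ_+)$ to boundedness of the orbit $(M^n{1\choose 0})_n$, use $|\tr(M)|>2$ and $\det M=1$ to split into a contracting and an expanding eigendirection, read off \ref{it:eigenvector}$\Leftrightarrow$\ref{it:matrixEntries} from $M_{2,1}$ and $M_{1,1}$, and close \ref{it:compressionInj}$\Leftrightarrow$\ref{it:compressionInv} via Corollary~\ref{cor:injImpliesInvertibleForLimOps} and Proposition~\ref{prop:kurbatov}. The bookkeeping you flag as a potential obstacle is handled exactly as you sketch it (finitely many partial products $T(j-1)\cdots T(0)$ bound the intermediate entries), so there is no gap.
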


\begin{proof}
\ref{it:compressionInv} is independent of $p\in[1,\infty]$ by Proposition~\ref{prop:kurbatov}.
\ref{it:compressionInj}$\Rightarrow$\ref{it:compressionInv} is trivial for $p=\infty$, and \ref{it:compressionInv}$\Rightarrow$\ref{it:compressionInj} follows from Corollary~\ref{cor:injImpliesInvertibleForLimOps} for 
$p=2$. 

\ref{it:eigenvector}$\Leftrightarrow$\ref{it:matrixEntries} is obvious: the $2\times 2$-matrix $M$ has the eigenvector ${1\choose 0}$ if and only if $M_{2,1}=0$. The corresponding eigenvalue is then $M_{1,1}$.

It remains to show the equivalence of \ref{it:compressionInj} and \ref{it:eigenvector}.
To this end, assume that \ref{it:compressionInj} holds. This means that $H_+x=0$ has a bounded solution $x\in\ell^\infty(\ZZ_+)\setminus\{0\}$.
We reflect the homogeneous Dirichlet condition by putting $x_{-1} = 0$, and we let $x_0 \ne 0$ since otherwise the recursion~\eqref{eq:Jac_recursion_formula2} leads to $x=0$. By scaling, we can put $x_0=1$.  
The recursion formula~\eqref{eq:Jac_recursion_formula2} for the one-sided Schrödinger operator then gives us
    \begin{equation*}
         \begin{pmatrix}
            x_K\\
            x_{K - 1}
        \end{pmatrix}
        = 
        M \begin{pmatrix} x_0 \\ x_{-1}\end{pmatrix}
        = 
        M \begin{pmatrix} 1 \\ 0\end{pmatrix}
    \end{equation*}
 and furthermore
    \begin{equation}\label{eq:M^n10}
        \begin{pmatrix}
            x_{nK}\\
            x_{n K - 1}
        \end{pmatrix}
        = M^n \begin{pmatrix} 1 \\ 0 \end{pmatrix}
    \end{equation}
    for all $n \in \ZZ_+$.
    
    By assumption, the condition $|\tr(M)| > 2$ holds. 
    As in the proof of Proposition~\ref{prop:trace_condition1}, this gives for the eigenvalues of $M$, denoted by $\lambda_1, \lambda_2$, the chain of inequalities $|\lambda_1| > 1 > |\lambda_2|$.
    Let furthermore $(\xi_1, \xi_2)$ be a corresponding basis of eigenvectors of $M$.
    Because $x$ is bounded, also the vector sequence
    \begin{equation*}
        \left(M^n \begin{pmatrix} 1 \\ 0 \end{pmatrix}\right)_{n \in \ZZ_+}
    \end{equation*}
    is bounded.
    Describing $(1,0)^T$ with respect to the basis of eigenvectors yields
    \begin{equation*}
        M^n \begin{pmatrix} 1 \\ 0 \end{pmatrix} = M^n (  \alpha \xi_1 + \beta \xi_2   ) = \alpha \lambda_1^n \xi_1 + \beta \lambda_2^n \xi_2\,,\quad n\in\ZZ_+.
    \end{equation*}
    The boundedness of this sequence, together with  $|\lambda_1| > 1 > |\lambda_2|$,
    implies that $\alpha= 0$, so that ${1\choose 0}$ does not have a component into the direction $\xi_1$, i.e.\ it is an eigenvector of $M$ for the eigenvalue $\lambda_2$. So we arrive at \ref{it:eigenvector}.
    
    Finally, from \ref{it:eigenvector} we conclude, by the same arguments, the boundedness of the vector sequence~\eqref{eq:M^n10}, say $\|{x_{nK}\choose x_{nK-1}}\|_\infty<C<\infty$ for all $n\in\ZZ_+$. Since
    \[
    \begin{pmatrix}
            x_{nK+j}\\
            x_{n K - 1+j}
        \end{pmatrix} \ =\ T(j-1)\cdots T(1)T(0)
        \begin{pmatrix}
            x_{nK}\\
            x_{n K - 1}
        \end{pmatrix}
    \]
    for all $j\in\{1,\dots,K\}$, we get that 
\begin{equation*}    
|x_i| \leq 
\left\|
\begin{pmatrix}
  x_{i}\\
  x_{i-1}
\end{pmatrix}
\right\|_\infty
  \leq \max_{j=1, \ldots,K} \|T(j-1)\cdots T(1)T(0)\|C\,,\quad i\in\ZZ_+\,.
\end{equation*}
Hence, the solution $x$ of the equation $H_+x=0$ is bounded, which shows \ref{it:compressionInj}.
\end{proof}

Note that  $\abs{M_{1,1}} = 1$ is impossible in Proposition~\ref{rem:Monodromy_Hplus_not_inject}\ref{it:matrixEntries} if $M_{2,1} = 0$ because,
by $\det(M)=1$, then both eigenvalues of $M$ would have to have modulus one, 
contradicting $|\tr(M)|>2$.
Also note that the equivalence of \ref{it:compressionInv} and \ref{it:matrixEntries} can also be seen directly by Proposition~\ref{prop:oneSidedSpecRep}. 
We will mostly use the equivalence of their negations:
\begin{equation}\label{eq:H+inv}
H_+ \text{ is invertible on all }\ell^p(\ZZ_+)\quad\iff\quad M_{2,1}\ne 0\text{ or } |M_{1,1}|>1\,.
\end{equation}

Proposition~\ref{rem:Monodromy_Hplus_not_inject} not only yields the criterion~\eqref{eq:H+inv}, leading to our efficient test~\eqref{eq:checkM}, it is also the key to the following result about integer-valued potentials.

\begin{proposition}\label{prop:oneSidedPeriodic}
    Assume Hypothesis~\ref{hypo:PeriodicH} 
    with $p = \infty$ and $v(n) \in \mathbb{Z}$ for all $n \in \ZZ$. 
    If~$|\tr(M)| > 2$, that means, if $H$ is invertible, then
    \begin{enumerate}[label=(\alph*)]
      \item\label{it:H+inj}   $H_+$ is injective on $\ell^\infty(\ZZ_+)$ and 
      \item\label{it:H-inj}   $H_-$ is injective on $\ell^\infty(\ZZ_-)$\,.
\end{enumerate}
\end{proposition}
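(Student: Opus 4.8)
The plan is to read off from Proposition~\ref{rem:Monodromy_Hplus_not_inject} that, under the standing assumption $|\tr(M)| > 2$, failure of injectivity of $H_+$ on $\ell^\infty(\ZZ_+)$ is equivalent to the single arithmetic condition $M_{2,1} = 0$ together with $|M_{1,1}| < 1$. The whole point of the integrality hypothesis is that this condition becomes impossible. Indeed, at energy $E = 0$ the transfer matrices are $T(n) = \left(\begin{smallmatrix} -v(n) & -1 \\ 1 & 0 \end{smallmatrix}\right)$, which have integer entries whenever $v(n) \in \ZZ$; since $M = T(K-1)\cdots T(1)\,T(0)$ and each factor is symplectic with determinant $1$, the monodromy matrix lies in $\mathrm{SL}_2(\ZZ)$, i.e.\ it has integer entries and $\det M = 1$.

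For part~\ref{it:H+inj} I would argue by contradiction. Suppose $H_+$ is not injective on $\ell^\infty(\ZZ_+)$. By the equivalence Proposition~\ref{rem:Monodromy_Hplus_not_inject}\ref{it:compressionInj}$\Leftrightarrow$\ref{it:matrixEntries} this yields $M_{2,1} = 0$ and $|M_{1,1}| < 1$. With $M_{2,1} = 0$ the matrix $M$ is upper triangular, so $1 = \det M = M_{1,1} M_{2,2}$. As $M_{1,1}$ and $M_{2,2}$ are integers whose product is $1$, they are both $+1$ or both $-1$; in particular $|M_{1,1}| = 1$, contradicting $|M_{1,1}| < 1$. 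Hence no bounded nontrivial solution of $H_+ x = 0$ exists, and $H_+$ is injective on $\ell^\infty(\ZZ_+)$.

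For part~\ref{it:H-inj} I would reduce to part~\ref{it:H+inj} via the reversal from Lemma~\ref{lem:specH-}. The reversed potential $v^{\mathrm R}(n) = v(-n)$ is again integer-valued, and the identity $\tr(M_{H^{\mathrm R}}) = \tr(M_H)$ gives $|\tr(M_{H^{\mathrm R}})| > 2$, so $H^{\mathrm R}$ satisfies the hypotheses of part~\ref{it:H+inj}; thus $H^{\mathrm R}_+$ is injective on $\ell^\infty(\ZZ_+)$. Since $H_- = \Phi_- H^{\mathrm R}_+ \Phi_-^{-1}$ by~\eqref{eq:H_flipHplus} and the flip $\Phi_-$ is an isometric isomorphism of $\ell^\infty$, injectivity transfers to $H_-$ on $\ell^\infty(\ZZ_-)$.

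The only real content sits in the first paragraph: the observation that evaluating the monodromy matrix at $E = 0$ turns integrality of the potential into integrality of the entries of $M$. Everything else is then forced, because the obstruction to injectivity identified in Proposition~\ref{rem:Monodromy_Hplus_not_inject} --- a Dirichlet eigenvalue sitting exactly at $E = 0$ --- cannot coexist with an integer matrix of determinant one. I do not anticipate a genuine obstacle; the subtle point to state carefully is merely that $|M_{1,1}| = 1$ (rather than $< 1$) is exactly the borderline already excluded in the remark following Proposition~\ref{rem:Monodromy_Hplus_not_inject}.
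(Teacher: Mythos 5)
Your proposal is correct and follows essentially the same route as the paper: part~\ref{it:H+inj} via the equivalence \ref{it:compressionInj}$\Leftrightarrow$\ref{it:matrixEntries} of Proposition~\ref{rem:Monodromy_Hplus_not_inject} plus the observation that $M\in\ZZ^{2\times2}$ with $M_{2,1}=0$ and $\det M=1$ forces $|M_{1,1}|=1$, and part~\ref{it:H-inj} by applying part~\ref{it:H+inj} to $H^{\mathrm R}$ and transferring via \eqref{eq:H_flipHplus}. The only cosmetic difference is that the paper concludes the contradiction by noting the integer $M_{1,1}$ with $|M_{1,1}|<1$ would have to be $0$, which is impossible as $\det M=1$, whereas you derive $|M_{1,1}|=1$ directly from $M_{1,1}M_{2,2}=1$; these are the same argument.
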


\begin{proof}
\ref{it:H+inj}
 Let $H$ be invertible 
 and assume that our claim, injectivity of $H_+$ on $\ell^\infty(\ZZ_+)$, was false.
 Then, by Proposition~\ref{rem:Monodromy_Hplus_not_inject}\ref{it:compressionInj}$\Leftrightarrow$\ref{it:matrixEntries}, $M_{2,1} = 0$ and $\abs{M_{1,1}} < 1$.
 In particular, $M$ is upper triangular and $M_{1,1}$ and $M_{2,2}$ are its eigenvalues.
 However, $v(n)\in\ZZ$ for all $n\in\ZZ$ implies $M\in\ZZ^{2\times 2}$ and hence, both 
 $M_{1,1}$ and $M_{2,2}$ are integer. Together with $\abs{M_{1,1}} < 1$ this implies that the eigenvalue $M_{1,1}$ is zero, which is impossible since $\det(M)=1$.
 
    \ref{it:H-inj} 
    Since $H$ is invertible, $H^{\mathrm{R}}$ is invertible by Lemma~\ref{lem:specH-}.
    Applying \ref{it:H+inj} to $H^{\mathrm{R}}$ in place of $H$ shows that $H_+^{\mathrm{R}}$ is injective on $\ell^\infty(\ZZ_+)$.
    But by
    \eqref{eq:H_flipHplus} from the proof of Lemma~\ref{lem:specH-}, $H_-$ is injective, too.
\end{proof}

\subsection{Periodic Schrödinger Operators with \texorpdfstring{$\{0,\lambda\}$}{\{0, lambda\}}-Valued Potentials}\label{sec:0,lambda}
Before we prove our main theorems, we take a closer look at periodic Schrödinger operators with $\{0,1\}$-valued potentials and their relatives.
To balance between the Laplace interaction term and the pointwise multiplication by the potential, one often introduces a so-called \emph{coupling constant} $\lambda>0$ as a weight for the $\{0,1\}$-valued potential $v$. The result is a discrete Schrödinger operator with a $\{0,\lambda\}$-valued potential. In the examples announced in Remark~\ref{rem:12} and Remark~\ref{rem:14}, the potential is of this kind.

For $K\in\NN$ and $w\in\{0,1\}^K$, let $v\in\{0,\lambda\}^\ZZ$ be the periodic extension of $\lambda\cdot w$. We consider the discrete Schrödinger operator $H$ with potential $v$. Note that we do not specify $\lambda$ yet but keep it as a variable during most of the following computations.

For applicability of the FSM to $H$, we are, by Proposition~\ref{prop:FSMCondition}, interested in the invertibility, and hence the spectra, of
\begin{enumerate}[label=(\alph*)]
	\item $H$,
	\item all $L_+$ with $L\in\Lim_-(H)$,
	\item all $R_-$ with $R\in\Lim_+(H)$.
\end{enumerate}
So let us look at the union of spectra
\begin{equation}\label{eq:seaweed-set}
\sigma(H)\quad\cup\bigcup_{B\in\Lim(H)}\big(\sigma(B_+)\cup\sigma(B_-)\big)\,.
\end{equation}
We know from Section~\ref{sec:oneSidedPeriodic} that $\sigma(H)$ consists of at most $K$ closed spectral bands. 
Further, for $B\in\Lim(H)$, the spectra $\sigma(B_+)$ and $\sigma(B_-)$ are supersets of $\sigma(B)=\sigma(H)$, only larger by at most one Dirichlet eigenvalue per spectral gap. 
Let us plot the set~\eqref{eq:seaweed-set}, while varying the coupling constant $\lambda>0$.
Figure~\ref{fig:K4} shows that plot for $w=(1,1,0,1)$.  
\begin{figure}[htbp]
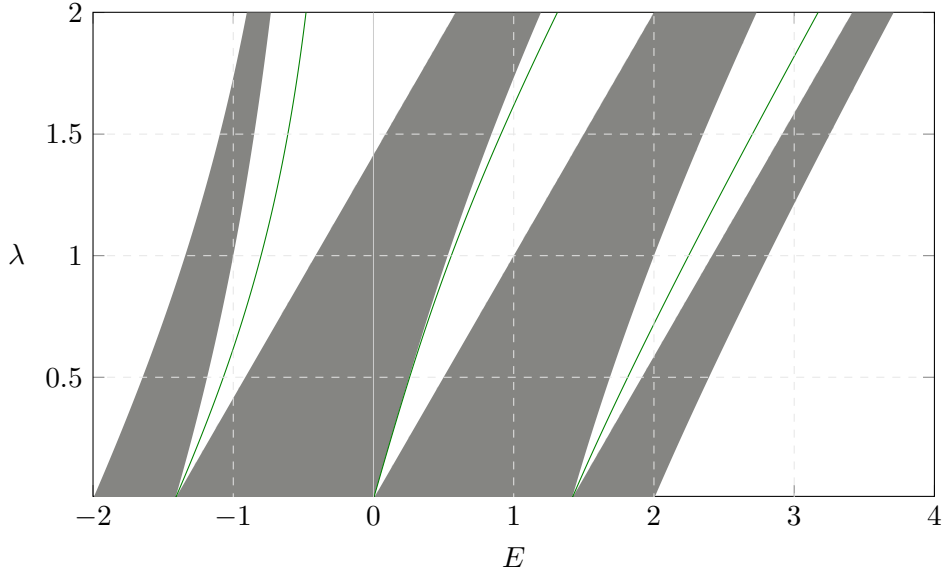
 


 	\caption{ \footnotesize
		Union~\eqref{eq:seaweed-set} of spectra for $H$ with potential $v=\lambda\cdot(1,1,0,1)$, periodically extended, while $\lambda$ changes along the vertical axis. The spectral bands are shown in grey and the Dirichlet eigenvalues in green. If we look at the vertical line at energy $E=0$, the plot suggests that, whenever $H$ is invertible (not grey), then all the $B_+$ and $B_-$ are invertible (not green). In other words: no green line is crossing the vertical line at $E=0$.
Hence, $H$ should be FSM-simple, where the rigorous proof is given in Section~\ref{sec:0,lambda-systematic}.
	}\label{fig:K4}
\end{figure}		
Figure~\ref{fig:K5} shows the union~\eqref{eq:seaweed-set} for $w=(1,1,0,1,0)$, and Figure~\ref{fig:K9} for $w=(1,1,0,1,0,1,0,1,1)$.
\begin{figure}[htbp]
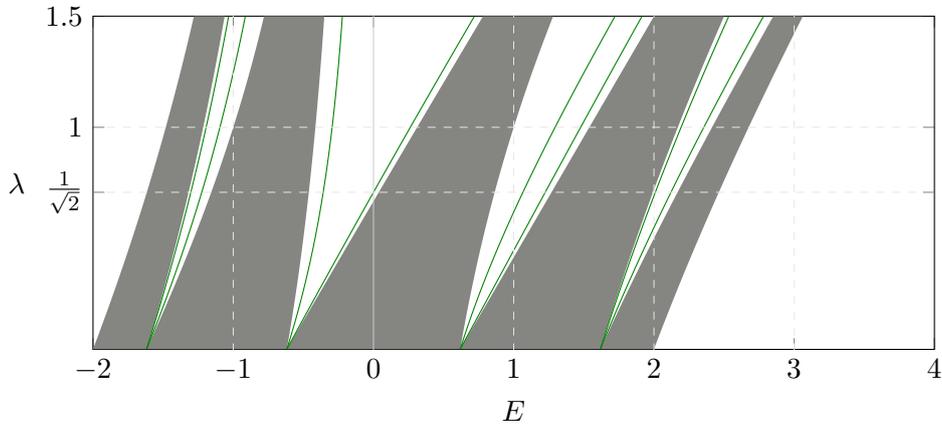



 	\caption{  \footnotesize
            Union~\eqref{eq:seaweed-set} of spectra for $H$ with potential $v=\lambda\cdot(1,1,0,1,0)$, periodically extended, while $\lambda$ changes along the vertical axis. The spectral bands are shown in grey and the Dirichlet eigenvalues in green. This time we see that one Dirichlet eigenvalue crosses the vertical line at energy $E=0$ and height $\lambda=\frac 1{\sqrt 2}$. So, for this $\lambda$, $H$ is invertible (not grey), but the FSM is not applicable (green). In Section~\ref{sec:0,lambda-systematic}, we detect this example algebraically, and in Example~\ref{ex:5per} it is proven that this operator is indeed not FSM-simple.
	}\label{fig:K5}
\end{figure} 
 \begin{figure}[htbp]
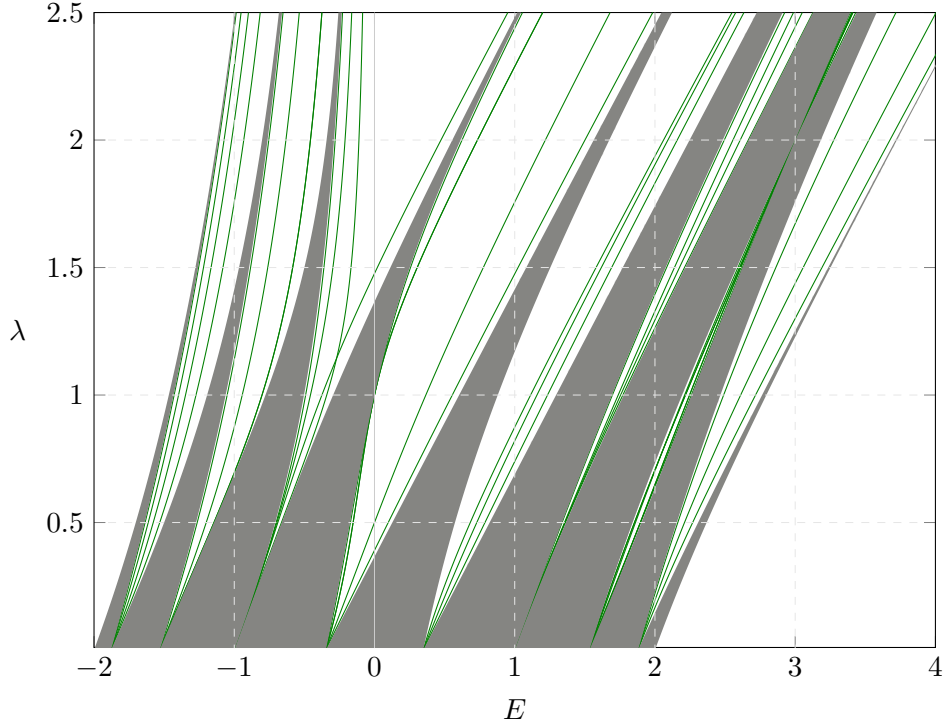
 


 	\caption{  \footnotesize
            Union~\eqref{eq:seaweed-set} of spectra for $H$ with the 9-periodic potential $v=\lambda\cdot(1,1,0,1,0,1,0,1,1)$, 
            periodically extended, while $\lambda$ changes along the vertical axis. The spectral bands are shown in grey and the Dirichlet eigenvalues in green. 
            At~$E = 3$ and $\lambda = 2$, two spectral bands merge into one before breaking up again.       
                   This time we see that one Dirichlet eigenvalue crosses the vertical line at energy $E=0$ and height $\lambda=\frac{1}{2}$. So, for $\lambda=\frac{1}{2}$, $H$ is invertible (not grey) but the FSM is not applicable (green). Unlike for periods $K<9$, this crossing happens at a rational value of $\lambda$.       
                   In Section~\ref{sec:0,lambda-systematic}, we show how to detect these examples algebraically, and in Example~\ref{ex:9per} it is proven that this operator with $\lambda=\frac{1}{2}$ is indeed not FSM-simple.		
	}\label{fig:K9}
\end{figure}
The latter two choices of $w$ will be revisited in Section~\ref{sec:examples}.

\section{Proofs of Our Main Theorems}\label{sec:proofs}
In this final section, we bring our results on the invertibility of one- and two-sided periodic Schrödinger operators together to conclude, via Proposition~\ref{prop:FSMCondition}, our Theorems~\ref{thm:M21_M11} and \ref{thm:integerFSM} about the FSM. 
In fact, it is of advantage to give the two proofs in this order -- as the first and last subsection of Section~\ref{sec:proofs}. 
In between, we insert a subsection illustrating the use of Theorem~\ref{thm:M21_M11} with examples and a subsection that deals with an algorithm for their systematic study of $\{0,\lambda\}$-valued potentials. 
The latter contributes to the proof of Theorem~\ref{thm:integerFSM} that, as already mentioned, forms the last subsection.

\begin{samepage}
The proofs of this section  will exploit the following variant of Proposition~\ref{prop:FSMCondition} which is made possible by Lemma~\ref{lem:specH-} and relies only on one-sided compressions on $\ZZ_+$ of the limit operators of $H$ and $H^{\mathrm{R}}$:
\begin{align} 
\begin{array}{c}\text{FSM applicable to $H$}\end{array}
\begin{tabular}{cc}\text{\tiny Prop.~\ref{prop:FSMCondition} + Lem.~\ref{lem:specH-}}\\ $\iff$ \\\  \end{tabular}
\left\{
\begin{array}{l} 
  \text{the following are invertible:}\\[0.5em]
  \text{(a)}\ \ H\\[0.5em]
  \text{(b)}\ \ \text{all }L_+\text{ with } L\in\Lim_-(H)\\[0.5em]
  \text{(c)}\ \ \text{all }\widetilde{L}_+\text{ with } \widetilde{L}\in\Lim_-(H^\mathrm{R})
\end{array}
\right.
\end{align}
\end{samepage}

\subsection{Proof of Theorem~\ref{thm:M21_M11}}

We start with statement~\ref{it:thmTwoSided}.
Recall that invertibility of $H$ is characterised by the trace formula thanks to Proposition~\ref{prop:trace_condition1}.
Consequently, we assume $H$ to be invertible for the rest of the proof. 

In order to complete the proof of \ref{it:thmTwoSided}, we want to use Proposition~\ref{rem:Monodromy_Hplus_not_inject} in order to conclude that the invertibility of the one-sided compressions $L_+$ and $\widetilde{L}_+$ is characterised by the condition
\begin{equation}\label{eq:checkM2}
M_{2,1}\ne 0\qquad\text{or}\qquad |M_{1,1}|>1\,,
\end{equation}
for their respective monodromy matrices. 

Indeed, by Lemma~\ref{lem:limOpsPeriodic}\ref{it:limOpsPeriodic}, the limit operators $L$ and $\widetilde{L}$ are again periodic Schrödinger operators, and as such they are invertible by Lemma~\ref{lem:fundamental} as a consequence of the invertibility of $H$ and $H^\mathrm{R}$.
As a result, Proposition~\ref{rem:Monodromy_Hplus_not_inject} applies and all compressions $L_+$ and $\widetilde{L}_+$ are invertible if and only if their monodromy matrices $M_{\vphantom{\widetilde{L}}L}$ and $M_{\widetilde{L}}$ are subject to Condition~\eqref{eq:checkM2}.
Note that due to Lemma~\ref{lem:limOpsPeriodic}\ref{it:limOpsPeriodic}, we have a one-to-one correspondence between the monodromy matrices $M_L$ for $L \in \Lim_-(H)$ and the matrices $M^{(j)}$ in the formulation of Theorem~\ref{thm:M21_M11}. More precisely, $L = S^{-j} H S^{j}$ for some $j \in \{0,\dots,K-1\}$ which yields $M_L = M^{(j)}$. 
A similar one-to-one correspondence also holds for the matrices $M_{\widetilde{L}}$ and $\widetilde{M}^{(j)}$ with  $j  \in \{ 0,\dots,K-1 \}$; this time $\widetilde{M}^{(j)}=M_{\widetilde{L}}$ with limit operator $\widetilde{L}=S^{j-1}H^{\mathrm{R}}S^{-j+1} \in \Lim_-(H^{\mathrm{R}})$.

For statement~\ref{it:thmOneSided}, we employ the second part of Proposition~\ref{prop:FSMCondition} which characterises applicability of the FSM to $H_+$ via invertibility of $H_+$ and of its limit operators $\widetilde{L}_+$ for $\widetilde{L} \in \Lim_-(H^{\mathrm{R}})$. 
As in the proof of part~\ref{it:thmTwoSided}, the invertibility of $H_+$ and the limit operators $\widetilde{L}_+$ is characterised by the Condition~\eqref{eq:checkM2} through Proposition~\ref{rem:Monodromy_Hplus_not_inject}.

Finally, we assume that $H$ is FSM-simple and that $H_+$ is invertible.  
Then $H$ is invertible by Proposition~\ref{prop:oneSidedSpecRep} and therefore the FSM is applicable to $H$, which implies that the FSM is applicable to $H_+$ as shown above.
Hence, $H_+$ is FSM-simple.
\qed

\begin{remark}
Note that the argument in the last part of the proof shows that the implication
\begin{equation*}
H\text{ is FSM-simple}\quad\Rightarrow\quad H_+\text{ is FSM-simple}
\end{equation*}
generalises to all band operators $A$ with the property $A\in\Lim_+(A)$. 
Indeed, if $A$ is FSM-simple then invertibility of $A_+$ implies, by $A\in\Lim_+(A)=\Lim(A_+)$ and Lemma~\ref{lem:fundamental}\ref{it:AFredholm}$\Rightarrow$\ref{it:allLimOpsInvertible}, that $A$ is invertible. Since $A$ is FSM-simple, the FSM is applicable to $A$ and, by Proposition~\ref{prop:FSMCondition}, it is applicable to $A_+$.
\end{remark}

\subsection{Proof of Remark~\ref{rem:12} and \ref{rem:14}}\label{sec:examples}
\begin{example}\label{ex:3per}
  The $3$-periodic Schrödinger operator $H$ with continuously repeated potential $v(0) = 2$, $v(1) = \frac{1}{2}$, and $v(2) = \frac{1}{2}$  has monodromy matrices
\begin{equation*}
  M^{(0)}=\begin{pmatrix}2&\frac 34\\[0.25em]0&\frac 12\end{pmatrix},\quad
  M^{(1)}=\begin{pmatrix}\vphantom{\frac 12}2&0\\[0.25em]-\frac 34&\frac 12\end{pmatrix},\quad
  M^{(2)}=\begin{pmatrix}\frac 12&0\\[0.25em] 0&\vphantom{\frac 12}2\end{pmatrix}.
\end{equation*}
The trace of $M^{(0)}$ is $2+\frac 12=\frac 52>2$, whence $H$ is invertible.
The traces of $M^{(1)}$ and $M^{(2)}$ are, not surprisingly, also $\frac 52$.
But now comes the problem:
\begin{equation*}
M^{(2)}_{2,1}=0\qquad\text{and}\qquad |M^{(2)}_{1,1}|=\frac 12<1
\end{equation*}
so that $M^{(2)}$ fails the test~\eqref{eq:checkM}, whence one particular $L_+$ with $L\in\Lim_-(H)$ is not invertible, by Proposition~\ref{rem:Monodromy_Hplus_not_inject}. More precisely,
  $L_+=(S^{-2}HS^2)_+ = (H_l)_+$ with $l = (l_n) =(-3n+2)$.

It is easy to check that $\widetilde{M}^{(2)}=M^{(2)}$
so that also $\widetilde{M}^{(2)}$ fails \eqref{eq:checkM}. 
This means that $R_- =  H_- = (H_r)_-$ with $r = (r_n)=(3n+1)$ is not invertible.

By Theorem~\ref{thm:M21_M11}\ref{it:thmTwoSided}, the FSM is not applicable to $H$ -- although $H$ is invertible.
Hence, $H$ is not FSM-simple, as already announced in Remark~\ref{rem:12}\ref{it:periodThree}.

One further checks that the matrices $\widetilde{M}^{(0)}=M^{(1)}$ and $\widetilde{M}^{(1)}=M^{(0)}$ pass the test~\eqref{eq:checkM} which allows to conclude that the applicability of the FSM only fails because of the two ``bad'' compressions of the limit operators $H_l$ and $H_r$ identified above.
  However, with the detailed knowledge about the underlying ``bad'' sequences $(l_n)=(-3n+2)$ and $(r_n)=(3n+1)$ corresponding to these limit operators, one can adapt the FSM, see, e.g.,~\cite{Lindner.2010,Lindner.2018,Rabinovich.2008}. In particular, one can show that the \emph{adapted FSM} for $H$ with $A_n=(a_{ij})_{i,j=l_n'}^{r_n'}$ is applicable if the cut-off sequences $(l_n')$ and $(r_n')$ asymptotically avoid the sequences $(l_n)=(-3n+2)$ and $(r_n)=(3n+1)$, respectively.
\end{example}

\begin{remark}
If $v$ is the periodic extension of a palindrome, as in Example~\ref{ex:3per} above (use the palindrome $(\frac 12, 2, \frac 12)$) and Example~\ref{ex:9per} below, then 
\[
\{M^{(j)}:j=0,\dots,K-1\}\ =\ \{\widetilde M^{(j)}:j=0,\dots,K-1\}, 
\]
so that, by Theorem~\ref{thm:M21_M11}\ref{it:thmTwoSided} and \ref{it:thmOneSided}, 
the FSM is applicable to $H$ if and only if it is applicable to $H_+$.
\end{remark}

\begin{example}\label{ex:5per}
The $5$-periodic Schrödinger operator $H$ with repeated $\frac 1{\sqrt 2}(1,1,0,1,0)$
has five monodromy matrices $M^{(0)}, M^{(1)}, M^{(2)}, M^{(3)}, M^{(4)}$ and five reversed order monodromy matrices $\widetilde{M}^{(0)}, \widetilde{M}^{(1)}, \widetilde{M}^{(2)}, \widetilde{M}^{(3)}, \widetilde{M}^{(4)}$.

All ten matrices have the trace $\frac 3{\sqrt 2}>2$, showing that $H$ is invertible, but the monodromy matrix
\begin{equation*}
M^{(0)}\ =\ \begin{pmatrix} -\frac1{\sqrt 2}& -1\\0 &-\sqrt 2\end{pmatrix}\ =\ \widetilde{M}^{(3)}
\end{equation*}
fulfils $M^{(0)}_{2,1} = 0$ and $| M^{(0)}_{1,1} | < 1$. Hence the FSM is not applicable, where one $L_+$
and one $R_-$ are not invertible. The other
matrices $M^{(j)}$ and $\widetilde{M}^{(j)}$ pass the test~\eqref{eq:checkM}, though.
So, as announced in Remark~\ref{rem:12}\ref{it:sharpExamples}, this operator $H$ is not FSM-simple.

\end{example}

Note that a similar $7$-periodic example was discussed in the Bachelor thesis~\cite{Weber.2018}, which initially led to this study of the periodic case in a more structured way.

\begin{example}\label{ex:9per}
Consider the $9$-periodic Schrödinger operator $H$ with repeatedly $\frac 12(1,1,0,1,0,1,0,1,1)$ as its potential. All monodromy matrices (reflected or not) have trace equal to $-\frac 52\not\in[-2,2]$, whence $H$ is invertible. Like in Example~\ref{ex:3per}, the symmetry of the finite word above leads to symmetry of $v$ and, if $v$ is accordingly shifted, to $M^{(j)}=\widetilde{M}^{(j)}$ for all $j$.

  The $(2,1)$-entry of $M^{(j)}$ is zero for $j=0$ and for $j=1$:
\begin{equation*}
M^{(0)}\ =\ \begin{pmatrix} -\frac 12& 0\\0 &-2\end{pmatrix}\ =\ \widetilde{M}^{(0)},\qquad
M^{(1)}\ =\ \begin{pmatrix} -2& -\frac 34\\0 &-\frac 12\end{pmatrix}\ =\ \widetilde{M}^{(1)}.
\end{equation*}
Furthermore, the $(1,1)$-entry of $M^{(0)}$ is less than one in modulus.
Consequently, the FSM does not apply to $H$, so that $H$ is not FSM-simple.
\end{example}

\begin{example}\label{ex:onesided_only}
Consider the $9$-periodic Schrödinger operator $H$ with repeatedly $\frac 1{\sqrt 2}(1, 1, 1, 0, 1, 1, 0, 1, 0)$ as its potential.
We readily check that 
\begin{equation*}
M^{(1)} = \ \begin{pmatrix} -\frac1{\sqrt 2}& 2\\0 &-\sqrt 2\end{pmatrix}\
\end{equation*}
and therefore, as in Example~\ref{ex:5per}, the FSM is not applicable to $H$.
However, in contrast to Example~\ref{ex:5per}, there is no $j\in\{0,\dots, K-1\}$ such that $\widetilde{M}^{(j)} = M^{(1)}$.
In fact, one can check that the conditions of Theorem~\ref{thm:M21_M11}\ref{it:thmOneSided} are satisfied, i.e.\ that the FSM is applicable to $H_+$.
For an extensive list of all matrices $M^{(j)}$ and $\widetilde{M}^{(j)}$ see~\cite{Gabel.2021}.

In this computation, we assume that the finite vector $\frac 1{\sqrt 2}(1, 1, 1, 0, 1, 1, 0, 1, 0)$ forms entries $v(0),\dots,v(8)$ of the potential. If we place it at $v(-1),\dots,v(7)$ then the corresponding $M^{(0)}$, instead of $M^{(1)}$, fails the test~\eqref{eq:checkM}, so that $H_+$ is not invertible, whence the FSM is not applicable to it.
\end{example}

\subsection{Systematic Studies of \texorpdfstring{$\{0,\lambda\}$}{\{0, lambda\}}-Valued Potentials}\label{sec:0,lambda-systematic}
In this section, we show an algorithm to find non-FSM-simple examples like Examples~\ref{ex:5per} and \ref{ex:9per}. And because our algorithm does, in a certain sense, a systematic search, it can even be used to prove a positive result (for certain periods $K$) by \emph{not} finding such examples.

Recall from Section~\ref{sec:0,lambda} that for $K\in\NN$ and $w\in\{0,1\}^K$, the periodic extension of $\lambda\cdot w$ is denoted by $v\in\{0,\lambda\}^\ZZ$ and the corresponding periodic Schrödinger operator by $H$.

Without loss of generality, assume $K\ge 2$ and compute the monodromy matrix $M$ by $M(0)$ in $\eqref{eq:TransferMDefinition}$.
 Now the four entries of $M$ are given by polynomials in $\lambda$ and, because of \eqref{eq:H+inv}, we are particularly interested in values $\lambda$ where $M_{2,1}=0$. 
Up to period $K=9$, we can compute these zeros $\lambda$ exactly by radicals for the following reason: 
the polynomial $M_{2,1}(\lambda)$ has degree at most $K$.
One can see, by induction over $K$, that $M_{2,1}(\lambda)$ is always entirely even or odd.
Hence, polynomials of degree $6$ or $8$ can be reduced by substitution $\mu=\lambda^2$ to degree $3$ or $4$. For degree $5$, $7$, and $9$, one can factor out one $\lambda$ and apply the substitution $\mu=\lambda^2$ to the rest. So one always ends up with a polynomial of degree at most $4$ and can solve in radicals by the standard formulas.
 
Let us demonstrate the procedure for period $K=3$ in Table~\ref{tab:K=3}.
\begin{table}[!ht]
\begin{center}
	{\def\arraystretch{1.5}\tabcolsep=1.3ex
		\begin{tabular}{c@{\hskip 3ex}c@{\hskip 3ex}c@{\hskip 3ex}cc}
			\toprule
			\rowcolor{gray!45}
			$w$ & $ M $ & zeros of $M_{2,1}$ & $ \tr(M) $  & at zeros
			\\
			\bottomrule
      $(0,0,0)$ & $
                  \left(\begin{smallmatrix}
                    0&1\\ -1&0
                  \end{smallmatrix}\right) 
                  $
                  & $\varnothing$ & $0$
			\\ \rowcolor{gray!25}
      $(1,0,0)$ & $
                  \left(\begin{smallmatrix}
                    \lambda&1\\-1&0
                  \end{smallmatrix}\right) 
                  $
                  & $\varnothing$ & $0$ &
			\\
      $(0,1,0)$ & $
                  \left(\begin{smallmatrix}
                    0&1\\ -1&\lambda
                  \end{smallmatrix}\right) 
                  $
                  & $\varnothing$ & $\lambda$
			\\ \rowcolor{gray!25}
      $(1,1,0)$ & $
                  \left(\begin{smallmatrix}
                    \lambda&1\\ \lambda^2-1&\lambda
                  \end{smallmatrix}\right) 
                  $
                  & $ \pm 1$ & $2\lambda$ & $\pm 2$
			\\
      $(0,0,1)$ & $
                  \left(\begin{smallmatrix}
                    \lambda&1\\ -1&0
                  \end{smallmatrix}\right) 
                  $
                  & $\varnothing$ & $\lambda$
			\\ \rowcolor{gray!25}
      $(1,0,1)$ & $
                  \left(\begin{smallmatrix}
                    2\lambda&1\\ -1&0
                  \end{smallmatrix}\right) 
                  $
                  & $\varnothing$ & $2\lambda$ &
      \\
      $(0,1,1)$ & $
                  \left(\begin{smallmatrix}
                    \lambda&-\lambda^2+1\\-1&\lambda
                  \end{smallmatrix}\right) 
                  $
                  & $\varnothing$ & $2\lambda$
			\\ \rowcolor{gray!25}
      $(1,1,1)$ & $
                  \left(\begin{smallmatrix}
                    -\lambda^3+2\lambda \hphantom{x} &  -\lambda^2+1\\ \lambda^2-1&\lambda
                  \end{smallmatrix}\right) 
                  $
                  & $ \pm 1$ & $-\lambda^3+3\lambda$ & $ \pm 2$
			\\
			[0.1cm]
			\bottomrule
		\end{tabular}
	}
\end{center}
\caption{Procedure for $K = 3$}\label{tab:K=3}
\end{table}

We conclude that, for $K=3$, all $w\in\{0,1\}^3$, and all $\lambda\in\RR$, the implication
\begin{equation}\label{eq:implK=3}
M_{2,1}=0\qquad\stackrel{\text{Table}~\ref{tab:K=3}}\Longrightarrow\qquad |\tr(M)|=2
\qquad\stackrel{\ref{prop:trace_condition1}}\Longrightarrow\qquad H\text{ is not invertible}
\end{equation}
holds. The contraposition of \eqref{eq:implK=3} then shows
\begin{equation}\label{eq:impl2.K=3}
H\text{ invertible}\qquad\stackrel{\eqref{eq:implK=3}}\Longrightarrow\qquad M_{2,1}\ne 0
\qquad\stackrel{\eqref{eq:H+inv}}\Longrightarrow\qquad H_+\text{ is invertible}.
\end{equation}
By Lemma~\ref{lem:limOpsPeriodic}\ref{it:limOpsPeriodic}, all $L\in\Lim_-(H)$ and all $\widetilde{L}\in\Lim_-(H^{\mathrm{R}})$ are again 3-periodic Schrödinger operators with $\{0,\lambda\}$-valued potential that are invertible if $H$ is invertible by Lemma~\ref{lem:fundamental}.
We apply the reasoning~\eqref{eq:impl2.K=3} with $L$ and $\widetilde{L}$ in place of $H$ and derive that all corresponding compressions $L_+$ and $\widetilde{L}_+$ are invertible if $H$ is invertible.

Summarising, we conclude by Proposition~\ref{prop:FSMCondition} that the FSM is applicable to $H$ if $H$ is invertible, i.e., $H$ is FSM-simple. 
This settles the case $K=3$ in Theorem~\ref{thm:integerFSM}\ref{it:thmRat}.

For $K=4$, the corresponding table is easily computed and shows the same implications. As an illustration, Figure~\ref{fig:K4} shows the $4$-periodic example given by $w = (1,1,0,1)$. 
In this case, we get
\begin{equation*}
              M = 
              \begin{pmatrix}
                -2\lambda^2 + 1 & -2\lambda \\
                \lambda         & 1
              \end{pmatrix},
\end{equation*}
so that $M_{2,1} = 0$ if and only if $\lambda = 0$. In this case, however $\tr(M) = 2$. 
For $w=(0,0,0,0)$, one gets 
$M=                  
\left(\begin{smallmatrix}
  1&0\\0&1
\end{smallmatrix}\right) 
$, so that $M_{2,1} = 0$ for all $\lambda\in\RR$, but also here, $\tr(M) = 2$. After checking the other 14 cases of $w\in\{0,1\}^4$ in the same manner, we get the same result as in \eqref{eq:implK=3}.

The number of cases for period $K$ is obviously $2^K$. Because of this exponential growth, we have written a code in \emph{SageMath}, cf.~\cite{Sage}. Note that actually all computations are purely symbolic; the zeros are computed exactly up to $K=9$ 
and all tables and the code are well-documented in~\cite{Gabel.2021}. An interim summary at this point could be:
\vskip1ex
\begin{addmargin}[2em]{2em}
  \noindent\emph{If $K\in\{1,2,3,4\}$ and $\lambda\in\RR$, then all $K$-periodic discrete Schrödinger operators with a $\{0,\lambda\}$-valued potential are FSM-simple.}
\end{addmargin}
\vskip1ex
Note that, for $K=5$, one finds that $H$ defined by $\lambda=\frac{1}{\sqrt2}$ and $w=(1,1,0,1,0)$ is not FSM-simple, as shown in Example~\ref{ex:5per} and Figure~\ref{fig:K5}. 
Also for $w=(1,1,0,1,1)$ and $w=(1,1,1,1,1)$,
the entry $M_{2,1}$ has irrational zeros with $|\tr(M)|>2$.
However, we find that the only non-FSM-simple examples with $K=5$ have irrational $\lambda$. 
This behaviour continues up to period $K=8$, proving the statement:
\vskip1ex
\begin{addmargin}[2em]{2em}
  \noindent\emph{If $K\in\{5,6,7,8\}$ and $\lambda\in\QQ$, then all  $K$-periodic discrete Schrödinger operators $H$ with a $\{0,\lambda\}$-valued potential are FSM-simple.}
\end{addmargin}
\vskip1ex
By comparison, we find Examples~\ref{ex:9per} and~\ref{ex:onesided_only} at period $K=9$.

\subsection{Proof of Theorem~\ref{thm:integerFSM}}
We restrict ourselves to the two-sided infinite case here and show that $H$ on $\ell^p(\ZZ)$ is FSM-simple in the respective situations. 
By Theorem~\ref{thm:M21_M11}, it follows that also the one-sided compression $H_+$ is FSM-simple.

For~\ref{it:thmInt}: let the potential be integer-valued, and suppose that $H$ is invertible.
We have to show that the FSM is applicable to $H$.
We prove applicability of the FSM via Proposition~\ref{prop:FSMCondition}. As $H$ is invertible, it remains to show that all $L_+$ with $L\in\Lim_-(H)$ and all $R_-$ with $R\in\Lim_+(H)$ are invertible.
We argue with the following three steps:
\begin{enumerate}[label=(\Alph*)]
  \item All $L\in\Lim_-(H)$ and all $R\in\Lim_+(H)$ are again periodic Schrödinger operators with integer-valued potential by Lemma~\ref{lem:limOpsPeriodic}\ref{it:limOpsPeriodic}, and all are invertible as a consequence of Lemma~\ref{lem:fundamental} since $H$ is invertible.
\item Applying Proposition~\ref{prop:oneSidedPeriodic} to all such $L$ and $R$ in place of $H$ gives that each $L_+$ is injective on $\ell^\infty(\ZZ_+)$ and each $R_-$ is injective on $\ell^\infty(\ZZ_-)$.
\item It remains to apply Corollary~\ref{cor:injImpliesInvertibleForLimOps} to every such $L$ and $R$, which are, if considered as acting on $\ell^2(\ZZ)$, self-adjoint invertible band operators, to see that all $L_+$ with $L\in\Lim_-(H)$ and all $R_-$ with $R\in\Lim_+(H)$ are invertible.
\end{enumerate}
This completes the proof of part~\ref{it:thmInt}.

For~\ref{it:thmRat}: see Section~\ref{sec:0,lambda-systematic}.

For~\ref{it:thmTwoPer}:
        let $H$ be a $2$-periodic Schrödinger operator with real potential $v$, not necessarily integer-valued.
        If $L\in\Lim_-(H)$ and $\widetilde{L}\in\Lim_-(H^{\mathrm{R}})$, then also $L$ and $\widetilde{L}$ are $2$-periodic as a consequence of Lemma~\ref{lem:limOpsPeriodic}\ref{it:limOpsPeriodic}.
By Example~\ref{ex:gapsPeriodic}\ref{it:gapsPeriodic-2},
with $L$ and $\widetilde{L}$ in place of $H$, we conclude
\begin{equation*}
  \sigma(L_+) = \sigma(L) = \sigma(H)\qquad\text{and}\qquad\sigma(\widetilde{L}_+) = \sigma(\widetilde{L}) = \sigma(H^{\mathrm{R}})= \sigma(H).
\end{equation*}
So if $H$ is invertible, all $L_+$ and $\widetilde{L}_+$ are invertible, too, and the FSM is applicable to $H$, by Proposition~\ref{prop:FSMCondition}.
\qed

For a $2$-periodic potential, Figure~\ref{fig:K2} shows a plot of the critical region outside of which the trace condition is fulfilled and the FSM is applicable to $H$.
    
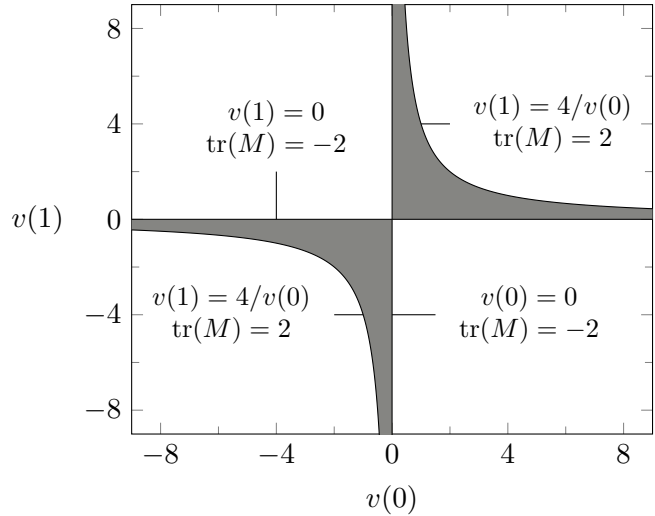
\begin{figure}[htbp]
    \begin{tikzpicture}
\begin{axis}[
xlabel = {$v(0)$} ,
ylabel = {$v(1)$} ,
ylabel style={rotate=-90} ,
xtick={-8,-4,0,4,8},
minor x tick num=1,
ytick={-8,-4,0,4,8},
minor y tick num=1,
xmin=-9 ,
xmax=9 ,
ymin=-9 ,
ymax=9,]
 \addplot [ mark=none, domain=-9:9,samples=901,name path=plot]
  {4/(x) };
 \path[name path=xaxis] (-9,0) -- (9,0);
 \path[name path=yaxis] (0,-9) -- (0,-9);
  \addplot[color=black] coordinates{(1,4) (2,4)} node[right] {\small\begin{tabular}{c} $v(1) = 4/v(0)$  \\ $\tr(M) = 2 $ \end{tabular}};
    \addplot[color=black] coordinates{(-1,-4) (-2,-4)} node[left] {\small\begin{tabular}{c} $v(1) = 4/v(0)$  \\ $\tr(M) = 2 $ \end{tabular}};
      \addplot[color=black] coordinates{(0,-4) (1.5,-4)} node[right] {\small\begin{tabular}{c} $v(0) = 0$  \\ $\tr(M) = -2 $ \end{tabular}};
        \addplot[color=black] coordinates{(-4,0) (-4,2)} node[above] {\small\begin{tabular}{c} $v(1) = 0$  \\ $\tr(M) = -2 $ \end{tabular}};
 \addplot[bandSpecCol] fill between [of=plot and xaxis];
 \draw (-9,0) -- (9,0);
\end{axis}
\end{tikzpicture}
   \caption{\footnotesize
  Example~\ref{ex:twosidedPeriodic}\ref{ex:twosidedPeriodic-2} for $E=0$ gives $\tr(M) = -2 + v(0)v(1)$. For all points $(v(0), v(1))$ lying outside of the closed grey region, we have $|\tr(M)|>2$. This means $v(0)v(1)\not\in[0,4]$ and, hence, the FSM for the $2$-periodic Schrödinger operator $H$ with potential $(v(0), v(1))$ is applicable. 
  }\label{fig:K2}  
  \end{figure}

\section*{Acknowledgement}
The authors would like to thank the anonymous referee for his or her interest and helpful comments on our manuscript.

\typeout{get arXiv to do 4 passes: Label(s) may have changed. Rerun}

\end{document}